\numberwithin{equation}{section}
\def\H{\mathcal H}
\def\R{\mathbb R}
\def\N{\mathbb N}
\def\dist{\hbox{dist}}
\def\e{\varepsilon}
\def\s{\sigma}
\def\vphi{\varphi}
\def\om{\omega}
\def\l{\lambda}
\def\g{\gamma}
\def\Om{\Omega}
\def\de{\delta}
\def\p{\mathbf{p}}
\def\na{\nabla}
\def\pa{\partial}
\def\E{\mathcal{E}}
\def\bd{{\rm bd}\,}
\renewcommand{\a}{\alpha}
\renewcommand{\b}{\beta}
\renewcommand{\l}{\lambda}
\renewcommand{\L}{\Lambda}
\newcommand{\var}{\varphi}
\renewcommand{\t}{\tau}
\newcommand{\ov}{\overline}
\newcommand{\diam}{\mathrm{diam}}
\newcommand{\Span}{\mathrm{span}}
\def\bd{{\rm bd}\,}
\def \HH{\mathbf{H}}
\def\p{\mathbf{p}}
\theoremstyle{plain}% default
\newtheorem{theorem}{Theorem}[section]
\newtheorem{lemma}[theorem]{Lemma}
\newtheorem{corollary}[theorem]{Corollary}
\newtheorem{proposition}[theorem]{Proposition}
\newtheorem{remark}[theorem]{Remark}
\newenvironment{MYitemize}{%
\begin{itemize}}{\end{itemize}}
\title{Axial symmetry for fractional capillarity droplets}
\author{C. Mihaila}
\begin{document}
\begin{abstract}
{\rm } A classical result of Wente, motivated by the study of sessile capillarity droplets, demonstrates the axial symmetry of every hypersurface which meets a hyperplane at a constant angle and has mean curvature dependent only on the distance from that hyperplane \cite{Wente80}. An analogous result is proven here for the fractional mean curvature operator.
\end{abstract}
\maketitle

\section{Introduction} Our motivation is the celebrated result by Wente \cite{Wente80}, which shows that a constant mean curvature hypersurface with constant contact angle along an hyperplane is a spherical cap. The more general statement contained in Wente's paper is actually concerned with hypersurfaces whose mean curvature depends only on the distance from their bounding hyperplane. More precisely, in \cite{Wente80}, it was proven that:

\medskip

\noindent {\it Let $E\subset\R^n$, $n\ge 2$, be a bounded open connected subset of $H=\{x_n>0\}$ such that $M=\ov{H\cap\pa E}$ is a $C^2$-hypersurface with boundary ${\rm bd}(M)=M\cap\pa H$. If, for a suitable constant $\s\in(-1,1)$ and function $g:(0,\infty)\to\R$,
\begin{equation*}\begin{split}
&\HH_E(q)=g(q_n)\qquad\forall q\in M\cap H\,,
\\
&\nu_E(q)\cdot e_n=\s\qquad\forall q\in M\cap\pa H\,,
\end{split}\end{equation*}
then $\{M\cap\{x_n=t\}:t>0\}$ is a family of $(n-2)$-dimensional spheres centered on the same vertical axis. Here $\ov{A}$ and $\pa A$ denote the topological closure and the topological boundary of any $A\subset \R^n$, $\bd(M)$ is the manifold boundary, and $\HH_E$ is the mean curvature of $M$ computed with respect to the outer unit normal $\nu_E$ to $E$.}

\medskip

We want to prove a generalization of this theorem with the classical mean curvature replaced by the fractional mean curvature, which was introduced in \cite{caffaroquesavin} as the first variation of fractional perimeter (see the discussion below Theorem \ref{thm axially symmetric} for additional context). Let us recall that, if $E$ is an open subset of $\R^n$, then the fractional mean curvature of order $s\in(0,1)$ is defined for each $q\in\pa E$ as
\begin{equation}
\label{fractional mean curvature}
\HH^{s}_{E} (q):={\rm p.v.}\,\int_{\R^n}\frac{\chi_{E^c}(x)-\chi_{E}(x)}{|x-q|^{n+s}}dx={\rm p.v.}\,\int_{\R^n}\frac{\tilde{\chi}_{E}(x)}{|x-q|^{n+s}}dx\,,
\end{equation}
where $\chi_E$ is the characteristic function of $E$ and 
\[
\tilde{\chi}_E:=\chi_{E^c}-\chi_E\,.
\]
 The integral in \eqref{fractional mean curvature} converges at $q\in\pa E$ as soon as $E$ is of class $C^{1,\a}$ around $q$ for some $\a\in(s,1)$. Indeed, in this case, given $r>\e>0$, the factor $\tilde{\chi}_{E}$ allows one to localize the integral of $|x-q|^{-n-s}$ over $B_r(q)\setminus B_\e(q)$ to a smaller region of the form $P\setminus B_\e(q)$, where $P\subset B_r(q)$ is a set enclosed between two tangent $C^{1,\a}$-paraboloids. For a set $E$ as in Wente's theorem, the size of the region where this kind of cancellation is possible becomes increasingly small as $q$ approaches $\pa H$, and as a consequence $\HH^s_{E}(q)$ will blow-up as $q_n\to 0^+$, at a rate defined by the contact angle. More precisely, as we show in Proposition \ref{H blow}, if $E$ satisfies the fractional variant of the assumptions of Wente's theorem, then
\begin{equation}\label{blow up}
\HH_E^s(q)=\frac{c(n,s,\s)}{q_n^s}\Big(1+{\rm O}(1)\Big)\qquad\mbox{as $q_n\to 0^+$ with $q\in \pa E\cap H$}\,.
\end{equation}
%where
%\[
%c(n,s,0)=0\,,\qquad \mbox{ and $\qquad c(n,s,\s)>0$ if $\s\ne0$}\,.
%\]
This singular behavior is an unavoidable and challenging feature of extending Wente's result to the fractional setting; overcoming it will be the most interesting point in the proof of the main result. 

To state our result we define the $s$-deficit of $E$
\[
\de_s(E)=\diam(E)^{s+1}\sup\limits_{p_n=q_n \atop p,q\in M\cap H} \frac{|\HH_E^s(p)-\HH_E^s(q)|}{|p-q|^{}}\,,
\]
to measure how far away $\HH_E^s$ is from being constant on horizontal slices in $H$. Note that in this definition the $\diam(E)^{s+1}$ term is used to enforce that $\de_s(E)$ is scale invariant. The following theorem explains how, for small $\de_s,$ we have that $E$ is almost axially symmetric with respect to $\de_s$: 

\begin{theorem}\label{thm axially symmetric}
Let $s\in(0,1)$, $\a\in(s,1)$, and let $E\subset\R^n$, $n\ge 2$, be a bounded open connected subset of $H=\{x_n>0\}$ such that $M=\ov{\pa E\cap H}$ is a $C^{2,\a}$-hypersurface with boundary ${\rm bd}(M)=M\cap\pa H$. Let  $\s\in(-1,1)$ such that
\begin{equation}\label{sigma}
\begin{split}
&\nu_E(q)\cdot\nu_H(q)=\s\qquad\forall q\in M\cap\pa H\,.
\end{split}
\end{equation}
\begin{enumerate}[label=(\alph*)]
\item For every direction $e\in S^{n-1}\cap\pa H$ there is a hyperplane $\pi_e$ orthogonal to $e$ such that, if we let $\rho$ denote reflection across $\pi_{e}$, then there exists a constant $C=C(n,s)$ dependent only on $n$ and $s$, such that 
\begin{equation*}
|E\Delta\rho (E)|\le C\diam(E)^{n}\sqrt{\de_s(E)}\,.
\end{equation*}
%for 
%\begin{equation}\label{constant one}
%C_1=3\sqrt{\frac{5}{2(n+s)}}\,.
%\end{equation}
\smallskip
\item\label{improved thm} For $h>0$ such that $E_h=\{x\in E:x_n=h\}\ne \emptyset,$ set 
\[
r_h=\inf\limits_{x\in \bd (E_h)}|x-h e_n|,\qquad R_h=\sup\limits_{x\in \bd( E_h)}|x-h e_n|\,,
\]
and 
\[
D_{h}:=B_{r_h}(he_n)\cap \{x:x_n=h\}\, .%\qquad \mbox{and }\qquad D_{R_h}:=B_{R_h}(he_n)\cap \{x:x_n=h\}
\]
There exist $\de_0=\de_0(n,s)$ and $C_0=C_0(n)$ such that, if $\de_s(E)\le \de_0,$ then
\begin{equation}\label{R diff bound}
\frac{R_h-r_h}{\diam(E)}\le 2C_0\frac{\diam(E)^{n}}{|E|}\sqrt{\de_s(E)} ,\qquad \forall h\in (0,\sup\{x_n:x\in E\})\,,
\end{equation}
up to horizontal translations. 
Moreover, if 
\[
\diam(E_h)> 6C_0\frac{\diam(E)^{n+1}}{|E|}\sqrt{\de_s(E)}\,,
\]
then
\[
D_{h}\subset E_h\,.
\]
or, if not, then
\begin{equation}\label{Rh bound}
\frac{R_h}{\diam(E)}\le 7C_0\frac{\diam(E)^{n}}{|E|}\sqrt{\de_s(E)}\,.
\end{equation}
\end{enumerate}

\end{theorem}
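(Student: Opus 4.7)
The plan is to carry out a quantitative moving-planes argument for each horizontal direction $e\in S^{n-1}\cap\pa H$. For $\l\in\R$ set $T_\l=\{x\cdot e=\l\}$, let $\rho_\l$ denote reflection across $T_\l$, and $E^\pm_\l=E\cap\{x\cdot e\gtrless\l\}$. Starting from $\l$ large enough that $E^+_\l=\emptyset$, I decrease $\l$ while maintaining the inclusion $\rho_\l(E^+_\l)\subset E^-_\l$, stopping at the critical value $\l_e$ where a standard failure occurs: either (i) an interior tangency at a point $q\in\pa E\cap H^-$ with $\rho_{\l_e}(q)\in\pa E\cap H^+$ sharing a tangent plane, or (ii) an orthogonal tangency at a point of $\pa E\cap T_{\l_e}\cap\pa H$. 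A crucial preliminary observation is that the blow-up profile \eqref{blow up} depends only on the vertical coordinate, hence is preserved by horizontal reflection and does not, by itself, obstruct the sliding.

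At $\l_e$ set $A_e=E^-_{\l_e}\setminus\rho_{\l_e}(E^+_{\l_e})$, so that $|E\Delta\rho_{\l_e}(E)|=2|A_e|$. The change of variables $y=\rho_{\l_e}(x)$ in \eqref{fractional mean curvature} applied to any $q\in\pa E\cap H^-$ produces
\begin{equation*}
\HH^s_E(\rho_{\l_e}(q))-\HH^s_E(q)=2\int_{A_e}\Big(\frac{1}{|y-q|^{n+s}}-\frac{1}{|y-\rho_{\l_e}(q)|^{n+s}}\Big)dy\,,
\end{equation*}
and the integrand is pointwise nonnegative because $q$ and $A_e$ lie on the same side of $T_{\l_e}$. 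Since $q$ and $\rho_{\l_e}(q)$ have equal $n$-th component, the deficit bounds the left-hand side by $\de_s(E)|q-\rho_{\l_e}(q)|/\diam(E)^{s+1}$. In case (i) the $C^{2,\a}$ pinch of $A_e$ at the tangency point, combined with a rearrangement comparison against a ball of equal measure, converts this into a pointwise bound on $|A_e|$; to squeeze out the square-root rate I then integrate the identity as $q$ varies over $\pa E\cap H^-$ and apply Cauchy--Schwarz, yielding the claimed $|A_e|\le C(n,s)\diam(E)^n\sqrt{\de_s(E)}$.

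For part (b), once (a) is available I pick $n-1$ linearly independent horizontal directions $e_1,\dots,e_{n-1}$ and perform a single horizontal translation of $E$ so that the intersection of the approximate symmetry hyperplanes $\pi_{e_i}$ lies on the vertical axis $\{x'=0\}$, with an error of the same order $\sqrt{\de_s(E)}$; the asymmetry bound in every other horizontal direction then forces $\pi_e$ to pass quantitatively near this axis. For each admissible $h$, picking $x\in\bd(E_h)$ that realizes $R_h$ and setting $e=(x-he_n)/R_h$, the reflection of $x$ by $\pi_e$ is the antipodal point $he_n-R_h e$, and a Fubini decomposition of $|E\Delta\rho_{\pi_e}(E)|$ across horizontal slices (responsible for the $1/|E|$ factor) delivers \eqref{R diff bound}. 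The dichotomy on $\diam(E_h)$ is then a triangle-inequality argument: either $E_h$ is large enough that the inscribed disk $D_h$ must lie inside $E_h$, or it is itself so small that \eqref{Rh bound} is forced.

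The main obstacle is case (ii), where the tangency point sits on $T_{\l_e}\cap\pa H$ and both the mean curvature and the integrand blow up. The standard interior comparison must be replaced by a boundary-adapted Serrin-type lemma that uses the precise rate \eqref{blow up} to extract the same quantitative control; making this substitution compatible with the $C^{2,\a}$ behavior of $M$ up to $\pa H$ is what I expect will require the most care.
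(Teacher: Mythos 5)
There is a genuine gap, and it sits exactly where the paper's main contribution lies. Your moving-planes dichotomy has only two terminal configurations, but in the half-space setting (as in Wente's theorem) there are four: the tangency point can lie on $\pa H$ \emph{and} off the critical plane, or on $\pa H$ \emph{and} on the critical plane, in addition to the two interior configurations. For any contact point $p\in\bd(M)$ the principal value in \eqref{fractional mean curvature} diverges, so the reflection identity you write at the tangency point becomes a meaningless $\infty-\infty$ there; your case (i) treatment simply does not apply to the configuration where $p\in\pa H$ but $p\notin T_{\l_e}$, and your case (ii) is only acknowledged, not proved. Saying that ``the standard interior comparison must be replaced by a boundary-adapted Serrin-type lemma that uses the precise rate \eqref{blow up}'' postulates precisely the missing content: the paper constructs it by approaching $p$ along sequences $q_m\in M$, $q_m^*\in\rho(M)$ built from the common tangent plane $A_pM$, controlling $\HH^s_{\rho(E)}(q_m^*)-\HH^s_E(q_m)$ via the gradient blow-up estimate $(q_m\cdot e_n)^{s+1}|\na\HH^s_E(q_m)|\lesssim c(n,s,\s)$ of Proposition \ref{H blow}(b), a six-subcase tangency analysis showing $|q_m-\tilde q_m|=O(B_m^{2+\a})$ while $h_m\gtrsim B_m$, and a delicate kernel truncation allowing the limits in $m$ and in the regularization parameter to be interchanged; in the fourth case one must instead differentiate along tangential directions $\t_m\in T_{q_m}M$ with $\t_m\cdot e_n=0$. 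None of this is replaceable by a one-line appeal to \eqref{blow up}, since the leading singular terms must cancel to order $o(q_n^{-s})$ before the deficit can be brought in.

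A second, independent flaw is your route to the square-root rate in case (i). The deficit $\de_s(E)$ only controls $|\HH^s_E(p)-\HH^s_E(q)|$ for pairs $p,q\in M\cap H$ at the \emph{same height}; for a generic $q\in\pa E$ on the small side of $T_{\l_e}$, the reflected point $\rho_{\l_e}(q)$ does not lie on $\pa E$, so $\HH^s_E(\rho_{\l_e}(q))$ is neither defined as a convergent principal value nor controlled by $\de_s(E)$, and the plan to ``integrate the identity as $q$ varies over $\pa E\cap H^-$ and apply Cauchy--Schwarz'' collapses. The paper instead works at the single tangency point, lower-bounds the integrand by $2(n+s)(\l-x_1)(\l-p_1)/\diam(E)^{n+s+2}$ to obtain the weighted estimate $\int_{E\Delta\rho(E)}\dist(x,\pi_\l)\,dx\le \frac{5}{2(n+s)}\diam(E)^{n+1}\de_s(E)$, and extracts $\sqrt{\de_s(E)}$ by a Chebyshev splitting near and far from $\pi_\l$; no rearrangement or Cauchy--Schwarz is needed. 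Finally, in part (b) the assertion that the asymmetry bounds ``force $\pi_e$ to pass quantitatively near the axis'' is exactly the paper's Lemma \ref{l bound 1}, whose proof requires the strip decomposition $m_k$ together with the monotonicity of horizontal slices furnished by the moving-planes inclusion; this step needs an argument, not just Fubini, since smallness of $|E\Delta\rho(E)|$ alone does not bound $\l_e$ without controlling how much mass $E$ carries beyond the critical plane.
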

\begin{remark}{\rm The constants in the statement are computable and are included in the proof. }
\end{remark}

\begin{remark} {\rm The second part  of the theorem implies that for a small enough cross-sectional diameter, we do not expect that the cross section is centered around the $x_n$-axis, but rather that it is contained in a small ball around the axis. However, if the cross section has large enough diameter in terms of the deficit, then it is pinched between two balls of close radii, both centered around the $x_n$-axis. }% In either case the definition of $R_h$ implies that $E_h\subset D_{R_h}$, which is of particular interest because with the bound \eqref{Rh bound} it implies almost symmetry. }
\end{remark}

\begin{remark}
{\rm The term $\diam(E)^n/|E|$, in \eqref{R diff bound} and \eqref{Rh bound}, is scale invariant and might be unbounded for a sequence of $E_m$ satisfying the hypotheses of the theorem. Indeed we could consider a sequence of elongated ellipses of fixed volume, for which this ratio does explode. However, for classical perimeter, there are density estimates that, when combined with a perimeter restriction, imply that in the case of small mean curvature deficit the ratio $\diam(E)^n/|E|$ can be bounded above by a constant. It could be interesting to see if, under assumptions on the contact angle between $M$ and $\pa H$, there are density estimates along the lines of those in \cite{caffaroquesavin, millotsirewang} that would provide an upper bound in terms of $n$ and $P_s(E)$ for this term. }
\end{remark}

The statement of the theorem implies the following corollary:
\begin{corollary}\label{cor axially symmetric}
Let $s,\a,\s, E$ and $M$ be as in Theorem~\ref{thm axially symmetric}. If $\de_s(E)=0,$ i.e. if,
\[
\HH^s_E(q)=g(q_n)\,,
\]
for $q\in M\cap H$ and some function $g$ dependent only on the vertical direction, then $E$ is axially symmetric. 
\end{corollary}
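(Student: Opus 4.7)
The plan is to apply Theorem~\ref{thm axially symmetric}(a) in every horizontal direction and then check that the resulting exact reflection symmetries share a common vertical axis. First, for each horizontal unit vector $e\in S^{n-1}\cap\pa H$, part~(a) with $\de_s(E)=0$ produces a hyperplane $\pi_e$ orthogonal to $e$ such that $|E\Delta\rho_e(E)|=0$, where $\rho_e$ denotes reflection across $\pi_e$. Since $E$ is open, this measure-zero symmetric difference upgrades to the pointwise identity $\rho_e(E)=E$. Because $e\in\pa H$, the hyperplane $\pi_e$ is vertical: it contains the direction $e_n$.

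Next I would verify that all the hyperplanes $\{\pi_e\}$ meet in a common vertical line. Let $p^{*}$ be the barycenter of $E$; each isometry $\rho_e$ preserves $E$ and hence fixes $p^{*}$, so $p^{*}\in\pi_e$ for every horizontal $e$. When $n=2$ there is (up to sign) a unique horizontal direction, and the axial symmetry is already $\rho_e(E)=E$. When $n\geq 3$, pick $n-1$ linearly independent horizontal directions $e_1,\ldots,e_{n-1}$; the intersection $\bigcap_{i=1}^{n-1}\pi_{e_i}$ is the vertical line $\ell$ through $p^{*}$. For any other horizontal $e$, the vertical hyperplane $\pi_e$ passes through $p^{*}$ and therefore contains the entire vertical line through $p^{*}$, so $\ell\subset \pi_e$.

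To finish, the reflections $\{\rho_e\}$ all fix $\ell$ pointwise and, restricted to the orthogonal complement $\ell^{\perp}\cong\R^{n-1}$, realize reflection across every linear hyperplane through the origin. Compositions of such reflections exhaust the full orthogonal group of $\ell^{\perp}$, so $E$ is invariant under every rotation about $\ell$; in particular each horizontal slice $M\cap\{x_n=t\}$ is an $(n-2)$-sphere centered on $\ell$, which is exactly axial symmetry. The only delicate point in this plan is the common-axis claim, which is settled cleanly by the barycenter observation; no other obstacle is expected.
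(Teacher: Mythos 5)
Your argument is correct in substance, but it follows a different route than the paper intends. The paper's corollary is meant to drop out of part~(b): with $\de_s(E)=0$ the hypothesis $\de_s(E)\le\de_0$ is automatic, \eqref{R diff bound} forces $R_h=r_h$ for every height $h$ (after the single horizontal translation that puts the critical planes for $e_1,\dots,e_{n-1}$ at $\{x_i=0\}$), and since every nonempty slice of an open set has positive diameter the inclusion $D_h\subset E_h$ applies, so each slice boundary is exactly the sphere of radius $r_h$ centered at $he_n$ --- axial symmetry about the $x_n$-axis with the common axis already built in by Lemma~\ref{l bound 1}. You instead use only part~(a) and recover the common axis by hand: exact reflection symmetry in every horizontal direction, the barycenter fixed by each reflection hence lying on every $\pi_e$, and the fact that reflections across all hyperplanes through a vertical line generate the full orthogonal group of the horizontal factor. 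This is a clean, elementary alternative that bypasses part~(b) and Lemma~\ref{l bound 1} entirely; what it loses is the quantitative information (the pinching of each slice between spheres of radii $r_h,R_h$) that the paper's route carries along for free. One step you should tighten: openness alone does not upgrade $|E\Delta\rho_e(E)|=0$ to $\rho_e(E)=E$ (two open sets can differ by a null set, e.g.\ $(0,1)\cup(1,2)$ versus $(0,2)$); here you should invoke the boundary regularity in (h1), which guarantees that $E$ coincides with the interior of its closure (equivalently, with its points of density one), so that the null symmetric difference of two such sets does force equality. With that fix, your proof is complete.
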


\begin{remark}
{\rm The behavior of $g(t)$ as $t\to 0^+$ in Corollary \ref{cor axially symmetric} is determined by $\s$ in \eqref{sigma} according to \eqref{blow up}.}
\end{remark}

We now give additional context to Theorem \ref{thm axially symmetric}. Our work joins the efforts of many authors in understanding geometric variational problems in the fractional setting. This line of research was initiated in \cite{caffaroquesavin} with the study of Plateau problem with respect to the fractional perimeter:
\[
P_{s}(E)=I_s(E,E^c)\,,
\]
where
\[
I_s(E,F)=\int_E\int_F\frac{dxdy}{|x-y|^{n+s}}
\]
is the the fractional interaction energy of a pair of disjoint sets $E,F\subset\R^n$. (For further studies of nonlocal minimal surfaces see for instance \cite{AmbdPhiMart, barriosfigallivald,figvald,savinvald, daviladelpinodipievald, davdelpwei}.)
The fractional mean curvature operator $\HH^s_{E}$ defined in \eqref{fractional mean curvature} arises because, if $E$ is of class $C^2$ around a point $x\in\pa E$, then the first variation of $P_s(E)$ along the flow generated by a compactly supported and smooth vector-field $X$ satisfies
\[
\de P_s(E)[X]=\int_{\pa E}\,(X\cdot\nu_E)\,\HH^s_{E}\,d\H^{n-1}\,,
\]
(see \cite{figfuscmagmillmor, abatvald} for further results).
In this direction, the closest related result to Theorem~\ref{thm axially symmetric} is the recent extension of the classical rigidity theorem of Aleksandrov \cite{aleksandrov} (boundaries of compact sets with constant mean curvature are spheres) to fractional mean curvatures due to \cite{ciraolofigallimagginovaga,cabrefallweth1}. To be precise, in these papers it was shown that, if $E$ is a bounded open set with boundary of class $C^{2,\a}$ for some $\a\in(s,1)$ and such that $\HH^s_{E}$ is constant along $\pa E$, then $\pa E$ is a sphere. This was proven by adapting the original moving plane argument by Aleksandrov to the fractional setting. 

Following this classical moving plane method will also be our approach to Theorem~\ref{thm axially symmetric}. A moving hyperplane perpendicular to $\pa H$ is slid in a given direction until the reflected cap of $M$ across this hyperplane achieves a first contact point with $M$. In the well-known case of Aleksandrov's argument one has to discuss two kinds of tangency points, which become four different kinds of tangency points in Wente's work \cite{Wente80}, and also in our situation. Two of these four cases, where the tangency point is achieved away from $\pa H$, follow by repeating the arguments of \cite{ciraolofigallimagginovaga,cabrefallweth1}. However, the other two cases require new considerations because of the aforementioned degeneracy of the fractional mean curvature near the boundary hyperplane. Their proof is the main contribution of the paper.

As much as Wente's result is related to the study of classical capillarity theory, and in particular to the study of critical points for the Gauss free energy in the sessile droplet problem (see \cite{Finn} or \cite[Chapter 19]{maggiBOOK}), Theorem \ref{thm axially symmetric} can also be motivated by the study of a capillarity model using fractional perimeters to mimic surface tension. More precisely, in \cite{maggivaldinoci} the authors consider a fractional variant of the classical Gauss free energy for a droplet $E$ confined inside a container $\Om$,
\[
\E_s(E,\Om)=I_s(E,E^c\cap\Om)+\g\,I_s(E,\Om^c)\qquad E\subset\Om\,,
\]
where $s\in(0,1)$, $\g\in(-1,1)$. In \cite[Theorem 1.3, Theorem 1.4]{maggivaldinoci} it was proven that if $E\subset\Om$ is a volume-constrained minimizer of $\E_s(\cdot,\Om)$ such that $M=\ov{\pa E\cap\Om}$ is a $C^{1,\a}$-hypersurface with boundary for some $\s\in(s,1)$ and with $\bd(M)\subset\pa\Om$, then, for $c$ a constant, the Euler-Lagrange equation
\[
\HH_E^s(q)+(\g-1)\int_{\Om^c}\frac{1}{|x-q|^{n+s}}dx=c
\]
holds at each $q\in \Om\cap M$, together with a contact angle condition (fractional Young's law):
\[
\nu_E(x)\cdot\nu_\Om(x)=\cos(\pi-\theta(s,\g))\qquad\forall x\in M\cap\pa\Om=\bd(M)\,,
\]
where $\theta(s,0)=\pi/2$, $\theta(s,(-1)^+)=0$ and $\theta(s,1^-)=\pi$. In the corresponding sessile droplet problem, where one takes $\Om=H=\{x_n>0\}$, we end up in the situation considered in Corollary \ref{cor axially symmetric}, by setting
\begin{equation}
\label{g mv}
\HH_E^s(q)=-c+(1-\g)\int_{H^c}\frac{1}{|x-q|^{n+s}}dx\,.
\end{equation}
Because, in this case, $\HH_E^s$  is a function of the $q_n$ variable alone, Theorem \ref{thm axially symmetric} implies the axial symmetry of every volume-constrained critical point of the fractional Gauss free energy on a half-space.

%Finally we can ask: when does one observe spherical caps in this fractional setting? If $\de_s(E)=0$, the answer depends on the function $g(x_n)$. The choice of $g$ motivated by the capillarity model discussed above does not lead to the observation of spherical caps, as they are not critical points in that model. That said, when imposing that $g$ is the non-constant, one-dimensional function defined by the fractional mean curvature of a given spherical cap, Theorem \ref{thm axially symmetric} implies that every hypersurface with this particular fractional mean curvature (and thus with the corresponding constant contact angle) must be axially symmetric. In the local case, once axial symmetry is proved, one can conclude sphericality by an ODE argument. As a rule of thumb, in passing from the local to the nonlocal setting one has to completely replace ODE arguments, so this last step is not so obvious in the fractional setting. We shall not further consider this problem here, as, from a broader point of view, the main point of this paper is discussing the moving planes method in the presence of boundary singularities typical of fractional elliptic operators.

\bigskip

\noindent {\bf Acknowledgments:} The author would like to thank Francesco Maggi for mentoring, guidance and helpful discussions. This work was supported by NSF-DMS Grants 1265910 and 1351122.

\section{Blowup of Fractional Mean Curvature}\label{blowup}
We summarize our basic notation and assumptions used in the paper. 
\medskip

% first column
\noindent\begin{minipage}[t]{0.25\textwidth}
\textbf{Assumption }(h1):
\end{minipage}
%\begin{minipage}[t]{0.001\textwidth}   
%\end{minipage}
%second column
\begin{minipage}[t]{0.7\textwidth}
We assume that $s\in(0,1)$, $\a\in(s,1)$, $n\ge2$, and $\mbox{$E\subset\R^n$}$ is a bounded open connected subset of $H=\{x_n>0\}$ such that \mbox{$M:=\ov{\pa E\cap H}$} is a $C^{2,\a}$-hypersurface with boundary $\mbox{$\bd(M):=M\cap\pa H$}$.
\end{minipage}

\medskip

For $q\in M$ we let 
\begin{equation*}\label{defAq}
A_qM:=q+T_qM\,.
\end{equation*}
\begin{remark}
The normal vector $\nu_{E}$ on $M\cap H$ extends to $M\cap \pa H.$
\end{remark}
\begin{remark}{\rm
Under assumption (h1), there exist $\eta>0$ and $\g>0$ such that, for all $q\in M$, we have
\[
M\cap B_{\eta}(q)\subset P_{\eta,\g}\,,
\]
for 
\begin{equation}\label{parabola}
P_{\eta,\g}(q):=\{x\in B_\eta(q): |x-\p_q x|<\g |q-\p_q x|^{2+\a}\}\,,
\end{equation}
where $\p_q x$ is the projection of $x$ onto $A_qM$.}
\end{remark}

From \cite[Lemma 2.1]{ciraolofigallimagginovaga} we know that $\HH_E^{s}(q)$ is $C^1$ for all $q\in M\cap H$. But the following proposition shows that, as mentioned in the first section, if $q_n\rightarrow 0$, then $\HH_E^{s}(q)$ blows up like $q_n^{-s},$ notably without the assumption that $\nu_E(q)\cdot\nu_H(q)$ is constant.

\begin{proposition} \label{H blow} Let $E,s,\a$ be as in (h1). If $p\in M\cap \pa H,$ $\{q_m\}_{m\in\N}\subset M\cap H,$ and $q_m\rightarrow p$ as $m\rightarrow\infty$, 
\begin{itemize}
\item[(a)]then
\[
\HH_E^s(q_m)=\frac{c\big(n,s,\nu_E(p)\cdot \nu_H)\big)}{(q_m\cdot e_n)^s}\Big(1+{\rm O}(1)\Big)\,,
\]
as $m\rightarrow \infty.$
If in addition there is $\s\in(-1,1)$ such that $\nu_E(p)\cdot \nu_H=\sigma$ for all $p\in M\cap \pa H$, then 
\begin{equation}\label{with sig}
\HH_E^s(q)=\frac{c\big(n,s,\s\big)}{(q\cdot e_n)^s}\Big(1+{\rm O}(1)\Big)\,,
\end{equation}
as $q\cdot e_n\rightarrow 0,$ for $q\in M\cap H.$
\smallskip
\item[(b)] Moreover
\begin{equation}\label{prop b}
\limsup_{m\rightarrow \infty}\; (q_m\cdot e_n)^{s+1}|\nabla\HH_E^s(q_m)|\le c\big(n,s,\nu_E(p)\cdot \nu_H\big)\,.
%||\nabla\HH_E^s(q_m)||_{C^0}\le\frac{c\big(n,s,\nu_E(p)\cdot \nu_H)\big)}{(q_m\cdot e_n)^{s+1}}\Big(1+{\rm O}(1)\Big)\,,
\end{equation}
\end{itemize}
\end{proposition}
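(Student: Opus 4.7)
The plan is to isolate the source of the blow-up by splitting the principal value integral according to the half-space geometry, and to approximate $E$ near $p$ by its tangent half-space. For the gradient estimate in (b) the same decomposition is differentiated under the integral sign.

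I begin with the contribution from $H^c$. Since $E\subset H$ we have $\tilde\chi_E\equiv 1$ on $H^c$, so this piece equals $\int_{H^c}|x-q_m|^{-n-s}\,dx$. The change of variables $x=q_m+(q_m\cdot e_n)\,y$ converts the integral to $(q_m\cdot e_n)^{-s}\int_{\{y_n\le -1\}}|y|^{-n-s}\,dy$, producing an explicit $(q_m\cdot e_n)^{-s}$ blow-up with universal constant depending only on $n$ and $s$.

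For the remaining p.v.\ integral over $H$, I would localize at scale $\eta_m:=R(q_m\cdot e_n)$ for a large fixed constant $R$. The "far" integral $\int_{H\setminus B_{\eta_m}(p)}$ is bounded: writing $\tilde\chi_E$ antisymmetrically about $A_{q_m}M$ and using the parabolic inclusion $M\cap B_\eta(q)\subset P_{\eta,\g}(q)$ from the preceding remark gives an integrable bound uniformly in $m$. On the "near" zone $B_{\eta_m}(p)\cap H$ I replace $\chi_E$ by $\chi_W$, where $W:=\{x:(x-p)\cdot\nu_E(p)<0\}\cap H$ is the tangent wedge at $p$; the set $(E\Delta W)\cap B_{\eta_m}(p)$ has measure controlled by $\eta_m^{\,n+1+\a}$ via the $C^{2,\a}$ regularity of $M$, contributing $O(\eta_m^{\,1+\a-s})=o((q_m\cdot e_n)^{-s})$ since $\a>s$. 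The wedge integral itself is scale invariant about $p$, so scaling reduces it to a bounded spherical integral depending only on $n$, $s$ and the opening angle, i.e.\ on $\nu_E(p)\cdot\nu_H$; combined with the $H^c$ contribution this produces the stated constant $c(n,s,\nu_E(p)\cdot\nu_H)$, and (\ref{with sig}) follows by uniformity in the contact angle.

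For part (b) I differentiate $\HH_E^s$ under the integral sign at $q_m\in M\cap H$, which is legal since $\HH_E^s$ is $C^1$ on $M\cap H$ by \cite[Lemma~2.1]{ciraolofigallimagginovaga}. The derivative brings down a factor $(x-q_m)/|x-q_m|^2$, so the same scaling $x=q_m+(q_m\cdot e_n)\,y$ yields an extra $(q_m\cdot e_n)^{-1}$ and the exponent $s+1$ bound. The main obstacle is controlling the "near" contribution uniformly as $q_m\to p$: the p.v.\ singularity at $x=q_m$ coexists with the degeneracy near $\pa H$, and one must simultaneously exploit the $C^{2,\a}$ parabolic inclusion (to pass from $E$ to its tangent half-space) and the antisymmetric cancellation with respect to $A_{q_m}M$ (to tame the principal value), both at the single scale $q_m\cdot e_n$. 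A second delicate point is verifying that the resulting wedge constant $c(n,s,\nu_E(p)\cdot\nu_H)$ is genuinely nonzero for all $\s\in(-1,1)$, which is exactly what makes the blow-up real and drives the moving-plane argument used in the proof of the main theorem.
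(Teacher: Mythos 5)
There is a genuine gap in your treatment of the near zone in part (a). You replace $E$ by the wedge $W$ whose flat face is the tangent hyperplane $A_pM$ at $p$, but the singularity of the kernel sits at $q_m$, which does \emph{not} lie on $\pa W$: since $q_m\in M$ and $q_m\to p$, its distance $d_m$ to $A_pM$ is of order $|q_m-p|^{2}$, small but positive. Hence $\tilde{\chi}_W$ has constant sign on the full ball $B_{d_m}(q_m)$, and $\int_{B_{d_m}(q_m)}|x-q_m|^{-n-s}dx$ diverges; the same happens for your ``error'' term, because $(E\Delta W)\cap B_{\eta_m}(p)$ contains roughly a half-ball around $q_m$. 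So neither the wedge integral over $B_{\eta_m}(p)\cap H$ nor the error integral is finite, and the estimate ``$|E\Delta W|\lesssim \eta_m^{\,n+1+\a}$ times the kernel, hence $O(\eta_m^{\,1+\a-s})$'' cannot be justified: the principal-value cancellation must be preserved at $q_m$, which forces the comparison surface to pass through $q_m$. This is exactly how the paper proceeds: it compares $E$ with the wedge $J_{q_m}$ bounded by $A_{q_m}M$ and $\pa H$ (so the difference is confined to the paraboloid $P_{\eta,\g}(q_m)$, tangent at $q_m$ to order $2+\a$, giving a finite error), and then compares $J_{q_m}$ with the translated wedge through a nearby boundary point $p_m$, using the exact antisymmetry of both wedges on $B_{q_m\cdot e_n}(q_m)$ and the angle $\theta_m\to0$ outside that ball. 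Relatedly, your claim that the far integral over $H\setminus B_{\eta_m}(p)$ is bounded uniformly in $m$ is false: $H$ is not symmetric about $A_{q_m}M$, and the asymmetric sliver of $H$ near $\pa H$, at distance of order $q_m\cdot e_n$ from $q_m$, contributes a term of order $(q_m\cdot e_n)^{-s}$ for fixed $R$; discarding it changes the constant.

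Part (b) is not actually proved: you state the scaling heuristic and then explicitly defer the ``main obstacle'' (combining the cancellation at $q_m$ with the parabolic inclusion at scale $q_m\cdot e_n$ for the more singular differentiated kernel), which is precisely the content of the proof. Moreover, differentiating the raw kernel $|x-q|^{-n-s}$ under the integral sign is not legitimate at the level of absolutely convergent integrals; the paper instead regularizes with $\var_\e$, uses $\nabla\HH_E^{s,\e}\to\nabla\HH_E^{s}$ from \cite[Lemma 2.1]{ciraolofigallimagginovaga}, and repeats the wedge comparison for $\nabla u_{\e,q_m}$, with the antisymmetry of the wedges on $B_{q_m\cdot e_n}(q_m)$ again doing the work near the singularity. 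Finally, the nonvanishing of $c(n,s,\s)$ that you flag as a delicate point is not needed for this proposition: (b) is only an upper bound, and (a) as stated does not require it.
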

\begin{remark}
{\rm Note that in the proposition the proof of (a) only requires that $M\in C^{1,\a},$ but the proof of (b) requires that $M\in C^{2,\a}.$}\end{remark}
\begin{proof} Consider $q_m$ and $p$ as in the statement. By assumption (h1) there exists $\mbox{$\{p_m\}_{m\in\N}\subset M\cap \pa H$}$ such that $|p_m-q_m|\rightarrow 0$ as $m\rightarrow \infty$ and $A_{q_m}M\cap\pa H$ is parallel to $A_{p_m}M\cap \pa H$ for each $m$. Let $J_{q_m}$ be the wedge in $H$ bounded by $A_{q_m}M$ and $\pa H$ such that $\nu_{J_{q_m}}(q_m)=\nu_E(q_m).$ Similarly let $J_{p_m}$ be the wedge in $H$ bounded by $A_{p_m}M$ and $\pa H$ such that $\nu_{J_{p_m}}(p_m)=\nu_E(p_m).$ Let $J_{p_m}^*$ be the horizontal translation of $J_{p_m}$ such that $q_m\in J_{p_m}^*,$ see Figure~1. Let $\theta_m$ be defined so that 
\[
\cos(\theta_m)=\nu_{J_{p_m}^*}(q_m)\cdot \nu_{J_{q_m}}(q_m)\,.
\]
\begin{figure}
\includegraphics[scale=.55]{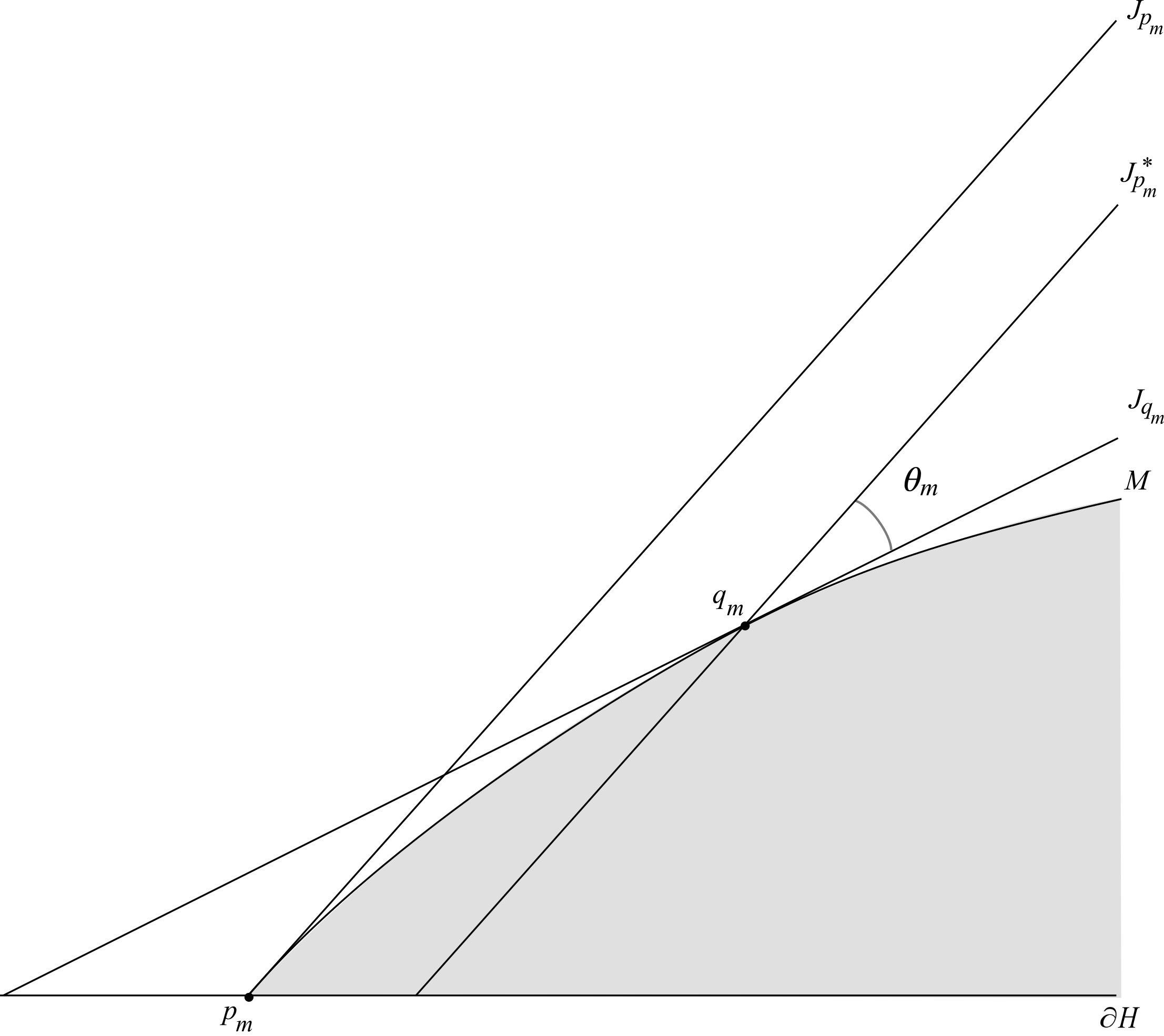}
 \caption{\small{The arrangement of the wedges in the case where $n=2$ and $\nu_E(p_m)\cdot \nu_H$ is negative. Here the shaded area represents the region inside of $E$.}}\label{wedges}
\end{figure} To prove (a), we will compare the mean curvature calculated at $q_m$ for $J_{p_m}^*$ and $J_{q_m}$ to understand $H_E^s(q_m)$ as $m\rightarrow\infty$. We first claim that 
\begin{equation}\label{first diff}
|\HH_{J_{q_m}}^s(q_m)-\HH_E^s(q_m)|\le \frac{K_0(n,s)}{\eta^s}+K_1(\eta,\a,\g)\,,
\end{equation}
for a constants $K_0(n,s)>0$ dependent only on $n$ and $s$, $K_1(\eta,\a,\g)>0$ dependent only on $\eta, \a$ and $\g$, and 
for $\eta$ as in \eqref{parabola}. In the following, the constant $K_0(n,s)$ may vary slightly from line to line, but for simplicity we will use the same letter. We use polar coordinates and find that
\begin{align}\label{outside wedge}
\int_{B_{\eta}(q_m)^c}\frac{\tilde{\chi}_{J_{q_m}}(x)}{|x-q_m|^{n+s}}dx
&\le n\om_n\int_\eta^{\infty}\frac{t^{n-1}}{t^{n+s}}dt\nonumber\\
&= \frac{K_0(n, s)}{\eta^s}\,,
\end{align}
and similarly
\begin{equation}\label{outside E}
\int_{B_{\eta}(q_m)^c}\frac{\tilde{\chi}_{E}(x)}{|x-q_m|^{n+s}}dx\le \frac{K_0(n, s)}{\eta^s}\,.
\end{equation}
By definition of $P_{\eta,\g}$ in \eqref{parabola}, we have
\begin{align}\label{in wedge vs E}
{\rm p.v.}\Big|\int_{B_{\eta}(q_m)}\frac{\tilde{\chi}_{J_{q_m}}(x)}{|x-q_m|^{n+s}}-\frac{\tilde{\chi}_{E}(x)}{|x-q_m|^{n+s}}dx\Big|&\le {\rm p.v.}\int_{P_{\eta,\g}(q_m)}\frac{dx}{|x-q_m|^{n+s}}\nonumber\\&=K_1(\eta,\a,\g)<\infty\,.%by F^2M63
\end{align}
So by \eqref{outside wedge}, \eqref{outside E}, and \eqref{in wedge vs E} we have \eqref{first diff} as claimed.
We next show that
\begin{equation}\label{second diff}
|\HH_{J_{p_m}^*}^s(q_m)-\HH_{J_{q_m}}^s(q_m)|\le |\theta_m|\frac{K_0(n,s)}{(q_m\cdot e_n)^s}\,.
\end{equation}
Indeed, we know by symmetry that 
\begin{equation}\label{in wedges}
0={\rm p.v.}\int_{B_{q_m\cdot e_n}(q_m)}\frac{\tilde{\chi}_{J_{q_m}}(x)}{|x-q_m|^{n+s}}dz={\rm p.v.}\int_{B_{q_m\cdot e_n}(q_m)}\frac{\tilde{\chi}_{J_{p_m}^*}(z)}{|x-q_m|^{n+s}}dx\,,
\end{equation}
while 
\begin{equation}\begin{split}\label{out wedges}
\Big|\int_{B_{q_m\cdot e_n}(q_m)^c}\frac{\tilde{\chi}_{J_{q_m}}(x)}{|x-q_m|^{n+s}}-\frac{\tilde{\chi}_{J_{p_m}^*}(x)}{|x-q_m|^{n+s}}dx\Big|
&\le \int_{q_m\cdot e_n}^{\infty}\frac{|\theta_m|n\om_nr^{n-1}}{\pi r^{n+s}}dr\\
&=|\theta_m|\frac{K_0(n,s)}{(q_m\cdot e_n)^s}\,.
\end{split}\end{equation}
By \eqref{in wedges} and \eqref{out wedges} we have \eqref{second diff}. Together (\ref{first diff}) and (\ref{second diff}) imply that 
\begin{equation}\label{triangle wedge 1}
|\HH_{J_{p_m}^*}^s(q_m)-\HH^s_E(q_m)|\le \frac{K_0(n,s)}{\eta^s} +K_1(\eta,\a,\g)+|\theta_m|\frac{ K_0(n,s)}{(q_m\cdot e_n)^s}\,.
\end{equation}
By scaling (see for example \cite[Theorem 1.4]{maggivaldinoci}) we have
\begin{equation}\label{wedge formula}
\HH_{J_{p_m}^*}^s(q_m)=\frac{c(n,s,\nu_{J_{p_m}^*}(q_m)\cdot\nu_H)}{(q_m\cdot e_n)^s}\,,
\end{equation}
where $c(n,s,\cdot)$ is continuous.
Since $|\theta_m|\rightarrow 0$ as $m\rightarrow\infty$ we know $|\nu_{J_{p_m}^*}(q_m)-\nu_{E}(p_m)|\rightarrow 0$ as $m\rightarrow\infty$.  Therefore, because $\nu_{E}(p_m)\rightarrow\nu_E(p)$ as $m\rightarrow\infty,$ \eqref{triangle wedge 1} and \eqref{wedge formula} imply that
\[
\HH^s_E(q_m)=\frac{c\big(n,s,\nu_E(p)\cdot\nu_H\big)}{(q_m\cdot e_n)^s}\Big(1+{\rm O}(1)\Big)\,,
\]~as $m\rightarrow\infty.$ Moreover, in the case that $\nu_E(p)\cdot\nu_H=\s$ for all $p\in M\cap\pa H$, this implies \eqref{with sig}.

The proof of (b) closely parallels that of part (a), with extra attention to convergence of $\na \HH_E^s$ throughout. Fix $m\in \N$ large. We approximate the kernel $|t|^{-(n+s)}$ by $\var_{\e}(t)\in C_c^{\infty}([0,\infty))$ such that $\var_\e\ge 0, \var'_\e\le 0,$ and
\[\left\{
\begin{array}{l}
t^{n+s}\var_\e(t)+t^{n+s+1}|\var'_\e(t)|\le C(n,s)\,,\\
|\var'_\e(t)|\uparrow \frac{n+s}{t^{n+s+1}} \qquad \mbox{ as }\e\rightarrow 0^+\,,
\end{array}
\right.\quad\forall t> 0\,.
\]
By construction $\var_\e(t)\uparrow |t|^{-(n+s)}$ for all $t>0.$ For $q\in M\cap \pa H$, we set
\[
\HH_E^{s,\e}(q):=\int_{\R^n}\tilde{\chi}_{E}(x)\var_\e(|x-q|)dx\,,
\] and let $u_{\e,q}(x)=\var_\e(|x-q|)$.  We have
\begin{equation*}
\nabla \HH_E^{s,\e}(q)=\int_{\R^n}\tilde{\chi}_E(x)\nabla u_{\e,q}(x) dx\,.
\end{equation*} Note that, by \cite{ciraolofigallimagginovaga}, 
\begin{equation}\label{convergence} 
\lim_{\e\rightarrow0}\nabla H_E^{s,\e}(q)=\nabla H_E^{s}(q)\qquad \forall q\in M\cap H\,.
\end{equation}
To prove \eqref{prop b} it suffices to show that 
\begin{equation}\label{real b}
\limsup_{\e\rightarrow 0}(q_m\cdot e_n)^{s+1}|\na H_E^{s,\e}(q_m)|\le  c\big(n,s,\nu_E(p)\cdot \nu_H\big)\,,
\end{equation} 
for large enough $m.$
%As in part (a) we will compare the directional derivative of the mean curvature calculated at $q_m$ for $J_{p_m}^*$ and $J_{q_m}$ to understand $|\na H_E^s(q_m)\cdot e|$ as $m\rightarrow\infty$. 
First we will show that
\begin{equation}\label{derivative triangle I}
\limsup_{\e\rightarrow 0}|\nabla \HH_{J_{q_m}}^{s,\e}(q_m)-\nabla \HH_E^{s,\e}(q_m)|\le \frac{K(n)}{\eta^{s+1}}+\tilde{K}(\eta,\a,\g)\,,
\end{equation}
for some $K(n)$ dependent only on $n$ and $\tilde{K}(\eta,\a,\g)>0$ dependent only on $\eta, \a$ and $\g.$  In the following, the constant $K(n)$ may vary slightly from line to line, but for simplicity we will use the same letter. We split 
\begin{equation}\begin{split}
\label{derivative triangle I 2}
\Big|\int_{\R^n}(\tilde{\chi}_{E}-\tilde{\chi}_{J_{q_m}})(x)\nabla u_{\e,q_m}(x)dx\Big|
\le&\Big|\int_{B_{\eta}(q_m)^c}(\tilde{\chi}_{E}-\tilde{\chi}_{J_{q_m}})(x)\nabla u_{\e,q_m}(x) dx\Big|\\%\nonumber\\
&+\Big|\int_{B_{\eta}(q_m)}(\tilde{\chi}_{E}-\tilde{\chi}_{J_{q_m}})(x)\nabla u_{\e,q_m}(x) dx\Big|\,.\qquad
%\le& \tilde{K(\eta,\a,\g)}+\frac{2\eta\om_n}{(s+1)\eta^{s+1}}\,.
\end{split}\end{equation}
By applying the monotone convergence theorem in $\e$ and by using polar coordinates we see that there is a constant $K(n)$ dependent only on $n$ such that 
\begin{align}\label{outside wedge dir}
\limsup_{\e\rightarrow0}\Big|\int_{B_{\eta}(q_m)^c}\tilde{\chi}_{J_{q_m}}(x)\nabla u_{\e,q_m}(x) \;dx\Big|&\le
\limsup_{\e\rightarrow0}\int_{B_{\eta}(q_m)^c}|\nabla u_{\e,q_m}(x)| \;dx\nonumber\\
&\le K(n)\int_\eta^{\infty}\frac{t^{n-1}}{t^{n+s+1}} \; dt\nonumber\\
&=\frac{ K(n)}{\eta^{s+1}}\,,
\end{align}
and similarly
\begin{equation}\label{outside E dir}
\limsup_{\e\rightarrow 0}\Big|\int_{B_{\eta}(q_m)^c}\tilde{\chi}_{E}(x)\nabla u_{\e,q_m}(x)dx\Big|\le\frac{K(n)}{\eta^{s+1}}\,.
\end{equation}
By definition of $P_{\eta,\g}$ in \eqref{parabola}, we have
\begin{align}\label{pinchie}
 \Big|\int_{B_{\eta}(q_m)}(\tilde{\chi}_{E}-\tilde{\chi}_{J_{q_m}})(x)\nabla u_{\e,q_m}(x) dx\Big|
%&\le\lim\limits_{\e\rightarrow 0}\Big|\int_{B_{\eta}(q_m)}(\tilde{\chi}_{E}-\tilde{\chi}_{J_{q_m}})(x)\nabla u_\e(x)\cdot e dx\Big| \nonumber\\
&\le\Big|\int_{P_{\eta,\g}(q_m)}(\tilde{\chi}_{E}-\tilde{\chi}_{J_{q_m}})(x)\nabla u_{\e,q_m}(x) dx\Big|\nonumber\\
&\le\int_{P_{\eta,\g}(q_m)}2|\vphi'_\e(|x-q_m|)| dx\,.
\end{align}
Therefore, because 
\[
 \lim\limits_{\e\rightarrow 0}\int_{P_{\eta,\g}(q_m)}2|\vphi'_\e(|x-q_m|)|\; dx= {\rm p.v.}\int_{P_{\eta,\g}(q_m)}\frac{2dx}{|x-q_m|^{n+s+1}}=\tilde{K}(\eta,\a,\g)<\infty\,,
\]
and by the monotone convergence theorem in $\e$, we can plug \eqref{outside wedge dir}, \eqref{outside E dir} and \eqref{pinchie} into \eqref{derivative triangle I 2} and get \eqref{derivative triangle I}. 
Next we prove that 
\begin{equation}\label{second diff II}
\limsup_{\e\rightarrow0}|\na\HH_{J_{p_m}^*}^{s,\e}(q_m)-\na\HH_{J_{q_m}}^{s,\e}(q_m)|\le |\theta_m|\frac{K(n)}{(q_m\cdot e_n)^{s+1}}\,.
\end{equation}
We know, by symmetry, that, for any $\e$,
\begin{equation}\label{in wedges diff}
0=\int_{B_{q_m\cdot e_n}(q_m)}\tilde{\chi}_{J_{q_m}}(x)\nabla u_{\e,q_m}(x) \;dx=\int_{B_{q_m\cdot e_n}(q_m)}\tilde{\chi}_{J_{p_m}^*}(x)\nabla u_{\e,q_m}(x) \;dx\,,
\end{equation}
so, 
\begin{align}
\Big|\int_{\R^n}(\tilde{\chi}_{J_{q_m}}-\tilde{\chi}_{J_{p_m}^*})(x)\nabla u_\e(x)dx\Big|
&=\Big|\int_{B_{q_m\cdot e_n}(q_m)^c}(\tilde{\chi}_{J_{q_m}}-\tilde{\chi}_{J_{p_m}^*})(x)\nabla u_\e(x)dx\Big|\nonumber\\
&\le\int_{B_{q_m\cdot e_n}(q_m)^c}|\tilde{\chi}_{J_{q_m}}-\tilde{\chi}_{J_{p_m}^*}|(x)\big|\vphi'_\e(|x-q_m|)\big|dx\,.\nonumber
\end{align}
Therefore, by monotone convergence theorem in $\e$, there is a constant $K(n)$ such that 
\begin{equation}\label{out wedges dif}
\limsup_{\e\rightarrow 0}\Big|\int_{\R^n}(\tilde{\chi}_{J_{q_m}}-\tilde{\chi}_{J_{p_m}^*})(x)\nabla u_\e(x)dx\Big|     \le|\theta_m|\frac{K(n)}{(q_m\cdot e_n)^{s+1}}\,,
\end{equation}
which implies \eqref{second diff II}. Together, \eqref{derivative triangle I} and \eqref{second diff II} imply that 
\begin{equation}\label{triangle wedge}
\limsup_{\e\rightarrow0}|\na \HH_{J_{p_m}^*}^{s,\e}(q_m)\cdot \t_m-\na \HH^{s,\e}_E(q_m)\cdot \t_m|\le \frac{K(n)}{\eta^{s+1}}+\tilde{K}(\eta,\a,\g)+|\theta_m|\frac{K(n)}{(q_m\cdot e_n)^{s+1}}\,.
\end{equation} 
For $m$ large we use \eqref{convergence}, exploit symmetry as in \eqref{in wedges diff}, apply monotone convergence theorem in $\e$, and use polar coordinates as in \eqref{outside wedge dir}, to find that there is a constant $c(n,s,\nu_E(p_m)\cdot\nu_H )>0$ dependent on $n,$ $s$, and $\nu_E(p_m)\cdot\nu_H$, such that 
\begin{align}\label{dif wedge formula}
|\na\HH_{J_{p_m}^*}^s(q_m)|&=\lim\limits_{\e\rightarrow 0}\Big|\int_{\R^n}\tilde{\chi}_{J_{p_m}^*}(x)\nabla u_{\e,q_m}(x) dx\Big|\nonumber\\
&=\lim\limits_{\e\rightarrow 0}\Big|\int_{B_{q_m\cdot e_n}(q_m)^c}\tilde{\chi}_{J_{p_m}^*}(x)\nabla u_{\e,q_m}(x) dx\Big|\nonumber\\
%&\le \int_{B_{q_m\cdot e_n}(q_m)^c}\frac{(n+s)|\tilde{\chi}_{J_{p_m}^*}(x)|}{|x-q_m|^{n+s+1}}dx\nonumber\\
&\le \int_{B_{q_m\cdot e_n}(q_m)^c}\frac{(n+s)|\tilde{\chi}_{J_{p_m}^*}(x)|}{|x-q_m|^{n+s+1}}dx\nonumber\\
&\le\frac{ c\big(n,s,\nu_E(p_m)\cdot\nu_H\big)}{(q_m\cdot e_n)^{s+1}}\,,
\end{align}
as $m\rightarrow \infty,$ where $c(n,s,\cdot)$ is continuous.
Letting $m\rightarrow\infty$ in \eqref{triangle wedge} and \eqref{dif wedge formula}, by $|\theta_m|\rightarrow 0$ and $\nu_{E}(p_m)\rightarrow\nu_E(p)$ as $m\rightarrow\infty$, we have 
\[
\limsup_{m\rightarrow \infty}(q_m\cdot e_n)^{s+1}\HH^s_E(q_m)\le c\big(n,s,\nu_E(p)\cdot\nu_H\big)\,.
\] 
\end{proof}

\section{Proof of Almost Axial Symmetry}

In this section we prove part (a) of Theorem \ref{thm axially symmetric}. Recall that in addition to assumption (h1) we assume the existence of $\s\in(-1,1)$ such that 
\begin{equation*}
\nu_E(q)\cdot\nu_H(q)=\s\qquad\forall q\in M\cap\pa H\,.
\end{equation*}

\begin{proof}[Proof of Theorem 1.1 (a)]  We start by introducing notation used in the proof. Let  $e\in S^{n-1}\cap\pa H$. Without loss of generality we assume that $e=e_1.$ We define
\[
\begin{array}{rlll}
\pi_\mu&=&\{x_1=\mu\}& \, \mbox{a hyperplane perpendicular to $e_1$}\,,\\
\rho_\mu(x)&=&(2\l-x_1,...,x_n)&\, \mbox{the reflection of $x$ across $\pi_\mu$}\,,
\end{array}
\]
and we set 
\begin{equation}\label{lambda}
\l=\inf\big\{\mu\in\R:\rho_\mu(E)\cap \{x_1< \mu\}\subset E \big\} 
\end{equation}
to be the critical value for $\mu$ for the moving planes argument. By regularity of the boundary, we know $\l$ is well defined. We call $\pi_{\l}$ the critical hyperplane and, as long as there is no confusion, we will denote
\[
\rho(x)=\rho_{\l}(x)\,.
\]
As in \cite{Wente80}, at least one of the four following cases holds in the critical position (see Figure~2):
\begin{MYitemize}
\item\textit{ Case one:} $\pa \rho(E)$ is tangent to $\pa E$ at some point $p\in M\cap \pi_{\l}^c\cap\bd(M)^c\,$.

\item\textit{ Case two:} $\pa \rho(E)$ is tangent to $\pa E$ at some point $p\in M\cap\pi_{\l}\cap\bd(M)^c\,$.

\item\textit{ Case three:} $\pa \rho(E)$ is tangent to $\pa E$ at some point $p\in M\cap\pi_{\l}^c\cap\bd(M)\,$.

\item\textit{ Case four:} $\pa \rho(E)$ is tangent to $\pa E$ at some point $p\in M\cap\pi_{\l}\cap\bd(M)\,$.
\end{MYitemize}

\begin{figure}
\includegraphics[scale=.45]{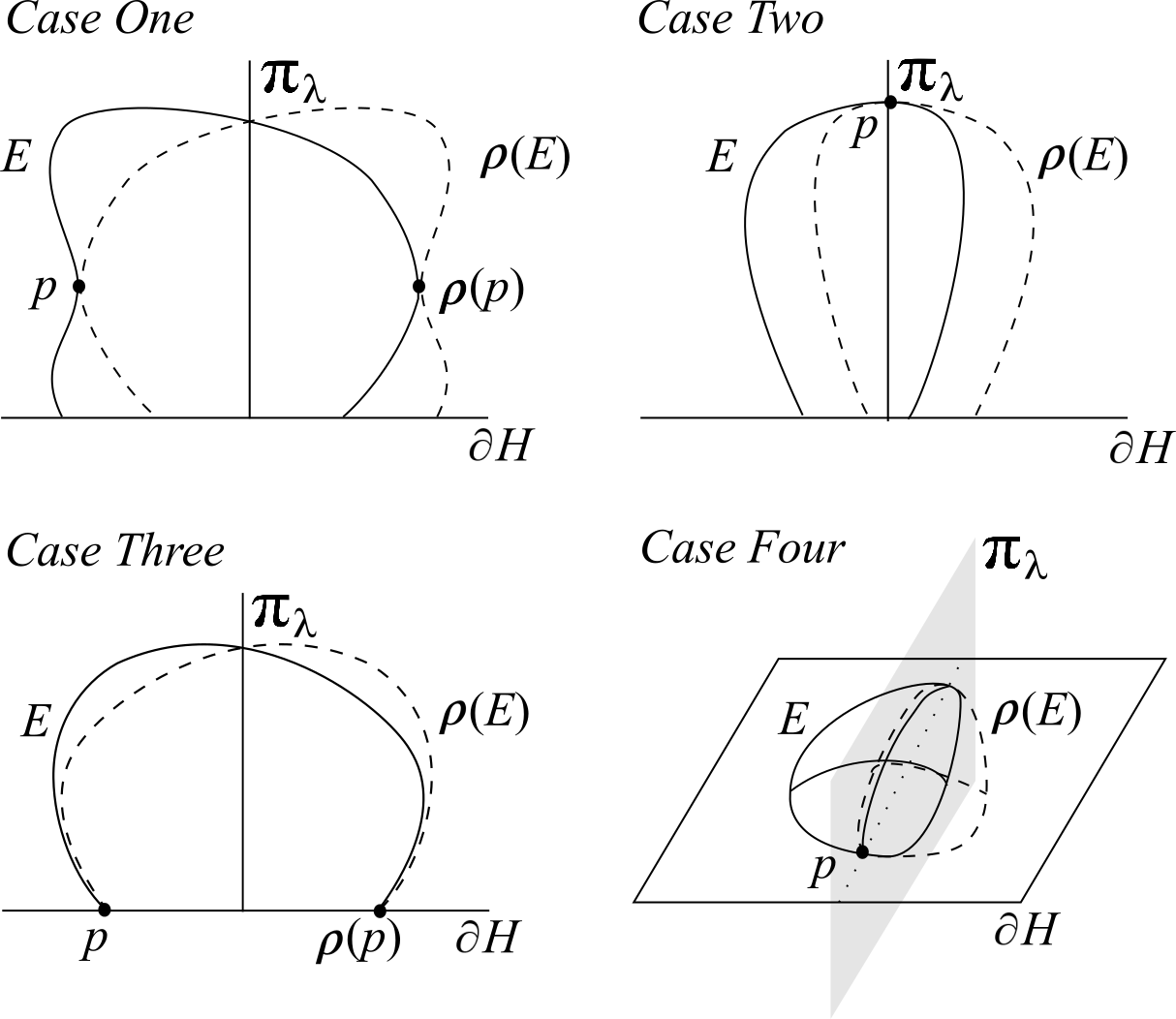}
 \caption{\small{The four cases of the moving planes argument.}}\label{cases}
\end{figure}
%\begin{figure}
% \includegraphics{caseone.png}
% \caption{\small{The critical position in case one. The boundaries of $E$ and $\rho(E)$ are depicted, respectively, as a black line and as a dashed line.}}\label{case one}
%\end{figure}
\smallskip

To prove almost symmetry our goal is to show that $|E\Delta \rho(E)|$ is bounded above by a multiple of $\sqrt{\de_s(E)}$ in each case. In particular we will prove that
\begin{equation}\label{int diff}
\int_{E\Delta \rho(E)}\dist(x,\pi_\l)dx\le \frac{5}{2(n+s)}\diam(E)^{n+1}\de_s(E)\,.
\end{equation} 
Case one and case two follow \cite{ciraolofigallimagginovaga}, but we include the proofs for the sake of completeness.

\medskip
\noindent\textit{Case one}:
Suppose there is $p\in\{x_1<\l\}$ such that $p\in M\cap \rho(M)\cap H.$ Then
\begin{equation*}
\begin{split}
\HH_E^s(\rho(p))-\HH_E^s(p)&=\HH_{\rho(E)}^{s}(p)-\HH_E^{s}(p)\\%By sim and bc $ H^c$ same for both
&=2\Big( \int_{E\setminus \rho(E)}\frac{1}{|x-p|^{n+s}}dx-\int_{\rho(E)\setminus E}\frac{1}{|x-p|^{n+s}}dx \Big)\\%sym and all thats left see if expand H
&=2\int_{E\setminus \rho(E)}\Big(\frac{1}{|x-p|^{n+s}}-\frac{1}{|\rho(x)-p|^{n+s}}\Big)dx \,.%also sym, x in E\\rho(E) is x; in reverse
\end{split}
\end{equation*}
Denote $x=(x_1,\bar{x}).$ By definition $\rho(x)=(2\l-x_1,\bar{x}),$ so 
\begin{equation}\begin{split}\label{case one calc two}
\Big(\frac{|\rho(x)-p|}{|x-p|}\Big)^{2}&=\frac{(2\l-2p_1-(x_1-p_1))^2+(\bar{x}-\bar{p})^2}{|x-p|^2}\\
&=1+ \frac{(2\l-2p_1)^2-2(2\l-2p_1)(x_1-p_1)}{|x-p|^2}\\
&=1+ \frac{4(\l-p_1)(\l-x_1)}{|x-p|^2}\,,
\end{split}\end{equation}
and thus
\begin{equation}\label{case 1 mid}\begin{split}
\frac{1}{|x-p|^{n+s}}-\frac{1}{|\rho(x)-p|^{n+s}}&=\frac{1}{|\rho(x)-p|^{n+s}}\Big[\Big(\frac{|\rho(x)-p|}{|x-p|}\Big)^{n+s}-1\Big]\\
&=\frac{1}{|\rho(x)-p|^{n+s}}\Big[\Big(1+\frac{4(\l-x_1)(\l-p_1)}{|x-p|^2}\Big)^{\frac{n+s}{2}}-1\Big]\,.%calc not even using sym
\end{split}
\end{equation}
Therefore, by the convexity of $f(t)=(1+t)^{(n+s)/2}-1$, we know that
\begin{equation}\label{case one pos}
\frac{1}{|x-p|^{n+s}}-\frac{1}{|\rho(x)-p|^{n+s}}\ge\frac{2(n+s)(\l-x_1)(\l-p_1)}{|\rho(x)-p|^{n+s}|x-p|^2}\ge\frac{2(n+s)(\l-x_1)(\l-p_1)}{\diam(E)^{n+s+2}}\ge0\,,
\end{equation}
for $x\in E\setminus \rho(E)$. Indeed, for $x\in E\setminus \rho(E),$ we have $|x-p|=|\rho(x)-p|\le\diam(E)$, $\l-x_1\ge0$, and $|p-\rho(p)|=2(\l-p_1)>0$. 
So 
\begin{equation*}\begin{split}
\frac{\de_s(E)}{\diam(E)^{s+1}}&\ge\frac{|\HH_E^s(\rho(p))-\HH_E^s(p)|}{|\rho(p)-p|}= \frac{|\HH_E^s(\rho(p))-\HH_E^s(p)|}{2(\l-p_1)}\\
&\ge\frac{2(n+s)}{\diam(E)^{n+s+2}}\int_{E\setminus \rho(E)}|x_1-\l |\,,
\end{split}\end{equation*}
which implies 
\[
\int_{E\Delta \rho(E)}|x_1-\l |\le\frac{1}{(n+s)} \diam(E)^{n+1}\de_s(E)\,,
\]
and therefore \eqref{int diff} in case one.

\medskip

\noindent\textit{Case two:} Suppose that there is $p\in\{x_1=\l\}\cap H$ such that $M$ is orthogonal to $\pi_\l$. As in the proof of Proposition \ref{H blow}, we approximate the kernel $|t|^{-(n+s)}$ by $\var_{\e}(t)\in C_c^{\infty}([0,\infty))$ such that $\var_\e\ge 0, \var'_\e\le 0,$ and
\[\left\{
\begin{array}{l}
t^{n+s}\var_\e(t)+t^{n+s+1}|\var'_\e(t)|\le C(n,s)\,,\\
|\var'_\e(t)|\uparrow \frac{n+s}{t^{n+s+1}} \qquad \mbox{ as }\e\rightarrow 0^+\,,
\end{array}
\right.\quad\forall t> 0\,.
\]
Then, by construction, $\var_\e(t)\uparrow |t|^{-(n+s)}$ for all $t>0.$ Recall that for any set $F\in \R^n$ we set $\tilde{\chi}_F(x)=\chi_{F^c}(x)-\chi_F(x).$ We define 
\[
\HH_E^{s,\e}(q):=\int_{\R^n}\tilde{\chi}_{E}(x)\var_\e(|x-q|)dx\,,
\] and let $u_\e(x)=\var_\e(|x-p|)$.
For all $a>0$, we know, by \cite{ciraolofigallimagginovaga}, that $\HH_E^{s,\e}\rightarrow\HH_E^{s}$ in $C^{1}(\pa E\cap \{x_n>a\})$ as $\e\rightarrow 0$. So we have
\begin{equation*}
\lim\limits_{\e\rightarrow 0}\nabla \HH_E^{s,\e}(p)\cdot e_1=\lim\limits_{\e\rightarrow 0}\int_{\R^n}\tilde{\chi}_E(x)\nabla u_\e(x)\cdot e_1dx\,.
\end{equation*}
Because $\nabla u_\e(x)\cdot e_1=\var'_\e(|x-p|)\frac{(x-p)\cdot e_1}{|x-p|}$ is odd with respect to the hyperplane $\{x_1=\l \}$, we know that
\[
\int_{ H^c}\nabla u_\e(x)\cdot e_1dx=0, \quad \int_{E\cap \rho(E)}\nabla u_\e(x)\cdot e_1dx=0, \quad \text{and } \quad \int_{(E\cup \rho(E))^c\cap H}\nabla u_\e(x)\cdot e_1dx=0\,.
\]
Therefore
\begin{equation*}
\begin{split}
\nabla \HH_E^{s,\e}(p)\cdot e_1&=\int_{\R^n}[\chi_{E^c}-\chi_{E}](x)\nabla u_\e(x)\cdot e_1dx\\
&=\int_{\R^n}\Big[\chi_{H^c}+\chi_{(E\cup \rho(E))^c\cap H}+\chi_{\rho(E)\setminus E}-\chi_{E\setminus \rho(E)}-\chi_{E\cap \rho(E)}\Big](x)\nabla u_\e(x)\cdot e_1dx\\
&=-2\int_{E\setminus \rho(E)}\nabla u_\e(x)\cdot e_1dx\,.
\end{split}
\end{equation*}
So, by the negativity of $\frac{(x-p)\cdot e_1}{|x-p|}$ for $x\in E\setminus \rho(E)\subset \{x_1<\l \}$ and by the monotone convergence of $|\varphi_\e'(t)|$ in $\e$, we have
\begin{equation*}
\nabla \HH_E^{s}(p)\cdot e_1=-2(n+s)\int_{E\setminus \rho(E)}\frac{(p-x)\cdot e_1}{|x-p|^{n+s+2}}dx\,.
\end{equation*}
Therefore, because $|\nabla \HH_E^s(p)\cdot e_1|\le \diam(E)^{-(s+1)}\de_s(E)$, and by
\[
\frac{(p-x)\cdot e_1}{|x-p|^{n+s+2}}\ge \frac{|x_1-\l |}{\diam(E)^{n+s+2}}\qquad \forall x\in E\setminus\rho(E)\,,
\]
we have that
\[
\frac{\de_s(E)}{\diam(E)^{s+1}}\ge\frac{2(n+s)}{\diam(E)^{n+s+2}}\int_{E\setminus \rho(E)}|x_1-\l |dx\,.
\]

\medskip

\noindent\textit{Case three}: We now consider the case where $p\in\bd(M)\cap\{x_1<\l\}$. As in case one we would like to consider the difference
\[
\HH^s_{\rho(E)}(p)-\HH^s_E(p)\,,
\]
but both terms equal infinity for $p\in\bd(M)$. So we use an approximation argument to improve our understanding of this difference as a limit. A key tool for this approximation, and also for a similar approximation in case four, will be the local graphicality of $M$ and $\rho(M)$ over $A_pM=A_p\rho(M)$ around $p.$ (Recall from Section \ref{blowup} that $A_qM:=q+T_qM$ for $q\in M$.) Let $U\subset A_p\cap H$ be a local neighborhood of $p$ in $A_pM\cap H$ such that we can define $v: U\rightarrow M$ to be the normal map along $\nu_E(p)$ to $M$. There is a set $U^*\subset U$ such that $v^*: U^* \rightarrow\rho( M)$ can be defined as the normal map along $\nu_E(p)$ to $\rho(M)$. If $U^*\ne U$ then we can reset it so that the sets are equal, because $U^*$ contains a neighborhood of $p$ in $A_pM$  on which $v$ may be defined. We can expand $v$ and $v^*$ as
\[
v(\hat q)=v(p)+\nabla v(p)[\hat{q}-p] +O(|\hat{q}-p|^{2})=p+\nabla v(p)[\hat{q}-p] +O(|\hat{q}-p|^{2})
\]
and
\[
v^*(\hat q)=v^*(p)+\nabla v^*(p)[\hat{q}-p]+O(|\hat{q}-p|^{2})=p+\nabla v^*(p)[\hat{q}-p]+O(|\hat{q}-p|^{2})\,,
\]
for $\hat{q}\in A_pM\cap H$ with $|\hat{q}-p|$ small enough.  Let 
\[
\hat{q}_m:=p-\frac{\nu_{co}^M(p)}{m} \qquad\mbox{for $m\in\N$}\,,
\] 
where $\nu_{co}^M(p)$ is the conormal vector for $p$ with respect to $M.$ Note that this conormal vector is also the same for $\rho(M)$ because $p$ is a point of tangency between $M$ and $\rho(M)$, $M$ is $C^{2,\a}$, and $M$ has constant contact angle with $\pa H.$
Let $q_m=v(\hat{q}_m)$ and $q_m^*=v^*(\hat{q}_m)$. We can now consider the correct analogue of $\HH^s_{\rho(E)}(p)-\HH^s_E(p)$ to be the limit of
\begin{equation}\label{case 3 diff}
Q_m:=\HH_{\rho(E)}^{s}(q_m^*)-\HH_E^{s}(q_m)\,,
\end{equation}
as $m\rightarrow\infty.$

\smallskip

  \noindent {\it Step one}: We show that 
  \begin{equation}\label{*}
  \limsup_{m\rightarrow\infty}|Q_m|\le  \frac{5}{2}|\l-p_1|\frac{\de_s(E)}{\diam(E)^{s+1}}
  \end{equation}
 as $m\rightarrow \infty$. The claim \eqref{*} is trivial if $\s= 0$ because then $q_m$ and $q_m^*$ have the same vertical component. Indeed, by definition of $\de_s$, we then have
 \[
| Q_m|\le \frac{\de_s(E)}{\diam (E)^{s+1}}|q_m-\rho(q_m)|\,,
 \]
which implies \eqref{*} because $|q_m-\rho(q_m)|\rightarrow |p-\rho(p)|=2|p_1-\l|$ as $m\rightarrow \infty$.
 Otherwise, let 
 \[
 e_p:=\frac{\nu_E(p)-(\nu_E(p)\cdot e_n)e_n}{|\nu_E(p)-(\nu_E(p)\cdot e_n)e_n|}\,.
 \]
 For $m$ large enough, and for each $q_m^*\in \rho(M)$ close enough to $p$, let $\tilde{q}_m\in M$ be defined to be the unique projection of $q_m^*$ onto $M$ along $e_p.$ Note that the definition of $\tilde{q}_m$ enforces that $q_m,q_m^*,\tilde{q}_m,$ and $\hat{q}_m$ are all contained in the plane $T:=\Span\{\nu_E(p),\nu_{co}^M(p)\}$, and by the continuity of the projection we have that $\tilde{q}_m\rightarrow p$ as $m\rightarrow\infty.$ 
 
 By the triangle inequality
\begin{equation}\label{**}
|Q_m|=\big|\HH_{\rho(E)}^{s}(q_m^*)-\HH_E^{s}(q_m)\big|\le\big|\HH_{\rho(E)}^{s}(q_m^*)-\HH_{E}^{s}(\tilde{q}_m)\big|+\big|\HH_{E}^{s}(\tilde{q}_m)-\HH_E^{s}(q_m)\big|\,. 
\end{equation}
By definition of $\de_s(E)$, we know
\[
\big|\HH_{\rho(E)}^{s}(q_m^*)-\HH_{E}^{s}(\tilde{q}_m)\big|=\big|\HH_E^{s}(\rho(q_m^*))-\HH_{E}^{s}(\tilde{q}_m)\big|\le |\rho(q_m^*)-\tilde{q}_m|\frac{\de_s(E)}{\diam(E)^{s+1}}\,,
\]
for any given $m$. But as $m\rightarrow \infty$ we know $ |\rho(q_m^*)-\tilde{q}_m|\rightarrow |\rho(p)-p|=2|\l-p_1|$. So for $m$ large enough 
\[
\big|\HH_{\rho(E)}^{s}(q_m^*)-\HH_{E}^{s}(\tilde{q}_m)\big|\le \frac{5}{2}|\l-p_1|\frac{\de_s(E)}{\diam(E)^{s+1}}.
\]
Therefore to prove \eqref{*} it suffices to show that 
\begin{equation}\label{*2}
\limsup_{m\rightarrow 0}|\HH_{E}^{s}(\tilde{q}_m)-\HH_E^{s}(q_m)|=0\,.
\end{equation}
By Proposition \ref{H blow}(b), there is a constant $c= c\big(n,s,\s\big)>0$ such that, if $h_m=\min\{q_m\cdot e_n, \tilde{q}_m\cdot e_n\}$, then
\begin{equation}\label{unif dif apl}
|\HH_{E}^{s}(\tilde{q}_m)-\HH_E^{s}(q_m)|\le c\big(n,s,\s\big)\frac{  |q_m-\tilde{q}_m|}{h_m^{s+1}}\,.
\end{equation}
We claim that
\begin{equation}\label{case three step one}
\lim_{m\rightarrow\infty}\frac{|q_m-\tilde{q}_m|}{|h_m|^{1+s}}=0\,,
\end{equation} 
which would suffice to prove \eqref{*2}.
 Because $\tilde{q}_m\in M$, for $m$ large enough there is $\hat{t}_m\in A_pM$ such that $v(\hat{t}_m)=\tilde{q}_m.$ Fix $m\in\N$ large. Let  $B_m=\max\{|\hat{q}_m-p|,|\hat{t}_m-p|\}$ and $\mbox{$b_m=\min\{|\hat{q}_m-p|,|\hat{t}_m-p|\}$}.$ So $B_m-b_m=|\hat{t}_m-\hat{q}_m|$. 
 To prove \eqref{case three step one} we will show that there exist constants $C(\s), C_1(\s)>0$, dependent only on $\s,$ such that 
   \begin{equation}\label{numerator}
 |q_m-\tilde{q}_m|\le C(\s)B_m^{2+\a}\,,
 \end{equation}
 and
 \begin{equation}\label{denominator}
 h_m\ge C_1(\s)\Big(B_m-C(\s)B_m^{2+\a}\Big)\,.
 \end{equation}
Together \eqref{numerator} and \eqref{denominator} give
 \begin{equation*}\begin{split}
 \limsup_{m\rightarrow\infty}\frac{|q_m-\tilde{q}_m|}{|h_m|^{1+s}}&\le \lim_{m\rightarrow\infty} \frac{C(\s)B_m^{2+\a}}{C_1(\s)(B_m-C(\s)B_m^{2+\a})^{s+1}}\\&= \lim_{m\rightarrow\infty}\frac{C(\s)B_m^{2+\a}}{C_1(\s)B_m^{1+s}(1-C(\s)B_m^{1+\a})^{1+s}}=0\,,
 \end{split}\end{equation*}
 which implies \eqref{case three step one} because $B_m\rightarrow 0$ as $m\rightarrow \infty$.

To prove  \eqref{numerator} and \eqref{denominator} we will exploit local graphicality of $M$ and $\rho(M)$ over $A_pM$, considered in $T$. In this plane, in a neighborhood of $p,$ there is an ordering of $M$, $\rho(M)$, and $A_pM$ with respect to $e_p.$ In particular, if we let $\hat{a}_m$ be the projection of $q_m^*$ along $e_p$ to $A_pM$, then $\tilde{q}_m\cdot e_p $, $q_m^*\cdot e_p,$ and $\hat{a}_m\cdot e_p$ have a fixed order for large enough $m$. Because the moving planes method forces $\tilde{q}_m\cdot e_p \ge q_m^*\cdot e_p,$ there are three possible orderings of these coordinates. Combined with the fact that we can consider $\s>0$ or $\s<0$, we have six total subcases to consider, see Figure 3. Because each case takes place in $T$ we set up the following notation. Define
\[
f(d):=v(p-d\nu_{co}^M(p))\cdot\big(\nu_E(p)\big)\,,
\qquad\mbox{ and }\qquad
f^*(d):=v^*(p-d\nu_{co}^M(p))\cdot\big(\nu_E(p)\big)\,.
\]
Because $v$ and $v^*$ are $C^{2,\a}$, with $v(p)=v^*(p)=0$, and with $\nabla v(p)\cdot\nu_E(p)=\nabla v^*(p)\cdot\nu_E(p)=0$, there exists a constant $\g>0$ such that, $f$ and $f^*$ are monotone, and 
\begin{equation}\label{f reg}
|f(d)|,|f^*(d)|\le \g d^{2+\a},
\end{equation}
for $d$ small enough. Moreover for $m$ large enough, there is a constant $c\le1$ %\le \g(2+\a)B_m^{1+\a}\le 1$ 
such that 
\begin{equation}\label{lip bound f}
|f(d_2)-f(d_1)|\le c|d_2-d_1|\qquad \forall d_1,d_2\mbox{ small enough}.
\end{equation}
Let $\tilde{f}(d):=f(d)+f^*(d).$ 
We first prove \eqref{numerator} and then \eqref{denominator}. By \eqref{lip bound f}, we have
\begin{equation}\label{real num}
 |q_m-\tilde{q}_m|=\sqrt{(B_m-b_m)^2+(f(B_m)-f(b_m))^2}\le 2|B_m-b_m|\,,
\end{equation}
so we only need to prove there is a constant $C(\s)$ dependent only on $\s$ such that 
\begin{equation}\label{real nume bound}
|B_m-b_m|\le C(\s)\tilde{f}(B_m) \le C(\s)B_m^{2+\a}\,,
\end{equation}
and we have \eqref{numerator}. In each subcase the proof comes from considering one or two right triangles in $T$ with one edge parallel to $A_pM$, one edge perpendicular to $A_pM$, and one edge parallel to $\pa H,$ where the last edge is contained in or equal to the line segment connecting $q_m^*$ and $\tilde{q}_m$. Define $\theta$ so that 
\[
\cos(\theta)=\nu_E(p)\cdot\nu_H=\s\,.
\]
  In the first three cases, where $\s>0$, note that $\theta<\pi/2$, and for the last three, where $\s<0$, note that $\theta>\pi/2$. 

\begin{figure}
\includegraphics[scale=.68]{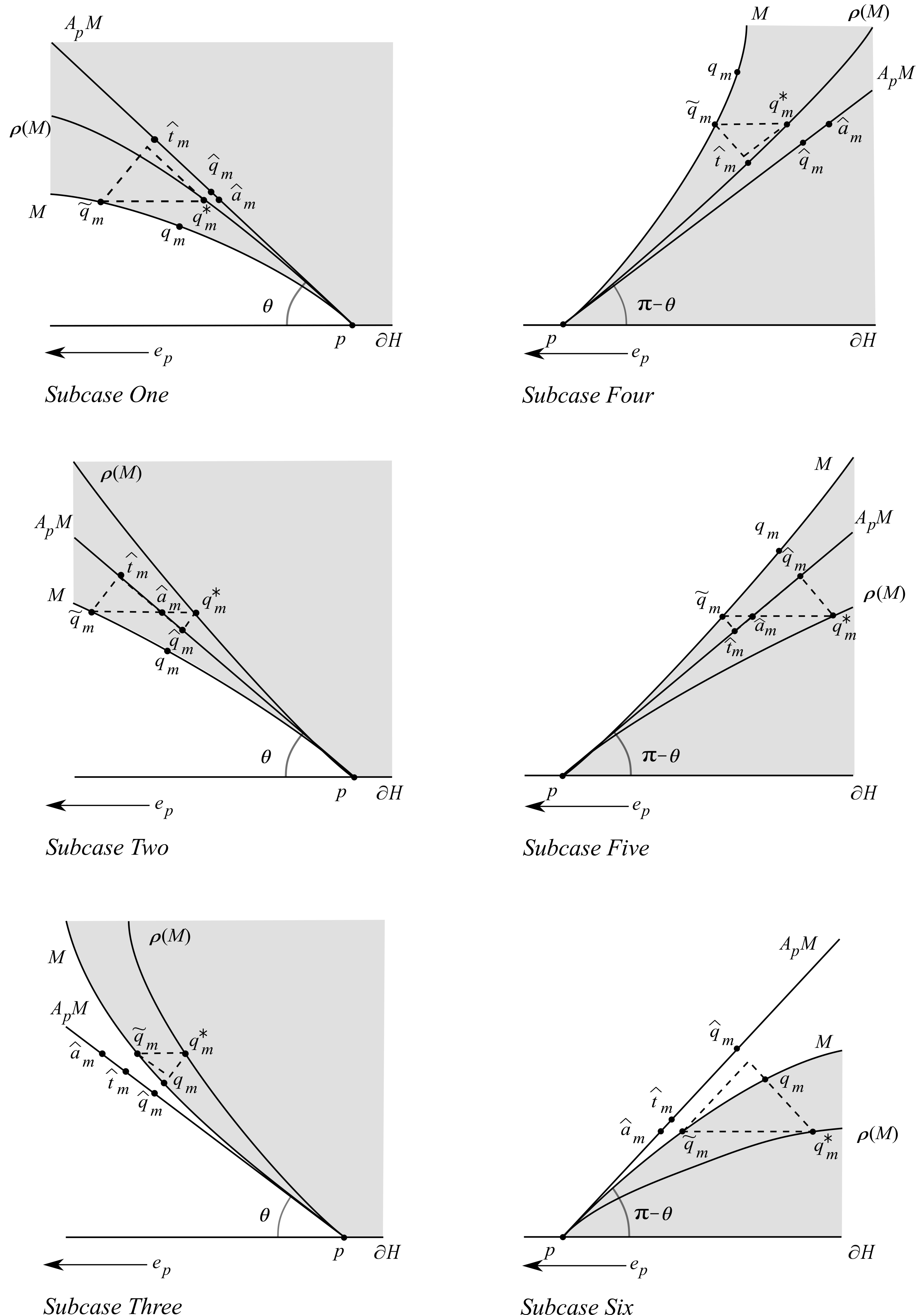}
 \caption{\small{The six subcases used in the proof of \eqref{numerator}. We consider cases depending on the sign of $\s$ and on the position of $M$ and $\rho(M)$ with respect to $A_pM$.} Here the shaded area represents the region inside of $E$.}\label{six cases}
\end{figure}

\noindent \textit{Subcase one}: Suppose $\s>0$ and $\hat{a}_m\cdot e_p \le q_m^*\cdot e_p\le \tilde{q}_m\cdot e_p$. 
Then 
\[
|B_m-b_m|\le \cot(\theta)f(B_m) \le C(\s)\tilde{f}(B_m)\,.
\]

\noindent \textit{Subcase two}: Suppose $\s>0$ and $q_m^*\cdot e_p\le \hat{a}_m\cdot e_p \le \tilde{q}_m\cdot e_p$. Then 
\[
|B_m-b_m|\le \cot(\theta)(f(B_m)+f^*(b_m)) \le C(\s)\tilde{f}(B_m)\,.
\]

\noindent \textit{Subcase three}: Suppose $\s>0$ and $q_m^*\cdot e_p\le\tilde{q}_m\cdot e_p \le  \hat{a}_m\cdot e_p$. Then 
\[
|B_m-b_m|\le \cot(\theta)(f^*(b_m)) \le C(\s)\tilde{f}(B_m)\,.
\]

\noindent \textit{Subcase four}: Suppose $\s<0$ and $\hat{a}_m\cdot e_p \le q_m^*\cdot e_p\le \tilde{q}_m\cdot e_p$. Then 
\[
|B_m-b_m|\le \cot(\pi-\theta)f(b_m) =-\cot(\theta)f(b_m) \le C(\s)\tilde{f}(B_m)\,.
\]

\noindent \textit{Subcase five}: Suppose $\s<0$ and $q_m^*\cdot e_p\le \hat{a}_m\cdot e_p \le \tilde{q}_m\cdot e_p$. Then 
\[
|B_m-b_m|\le -\cot(\theta)(f(b_m)+f^*(B_m)) \le C(\s)\tilde{f}(B_m)\,.
\]

\noindent \textit{Subcase six}: Suppose $\s<0$ and $q_m^*\cdot e_p\le\tilde{q}_m\cdot e_p \le  \hat{a}_m\cdot e_p$. Then 
\begin{equation*}
|B_m-b_m|\le -\cot(\theta)(f^*(B_m)) \le C(\s)\tilde{f}(B_m)\,.
\end{equation*}

\begin{figure}
\includegraphics[scale=.68]{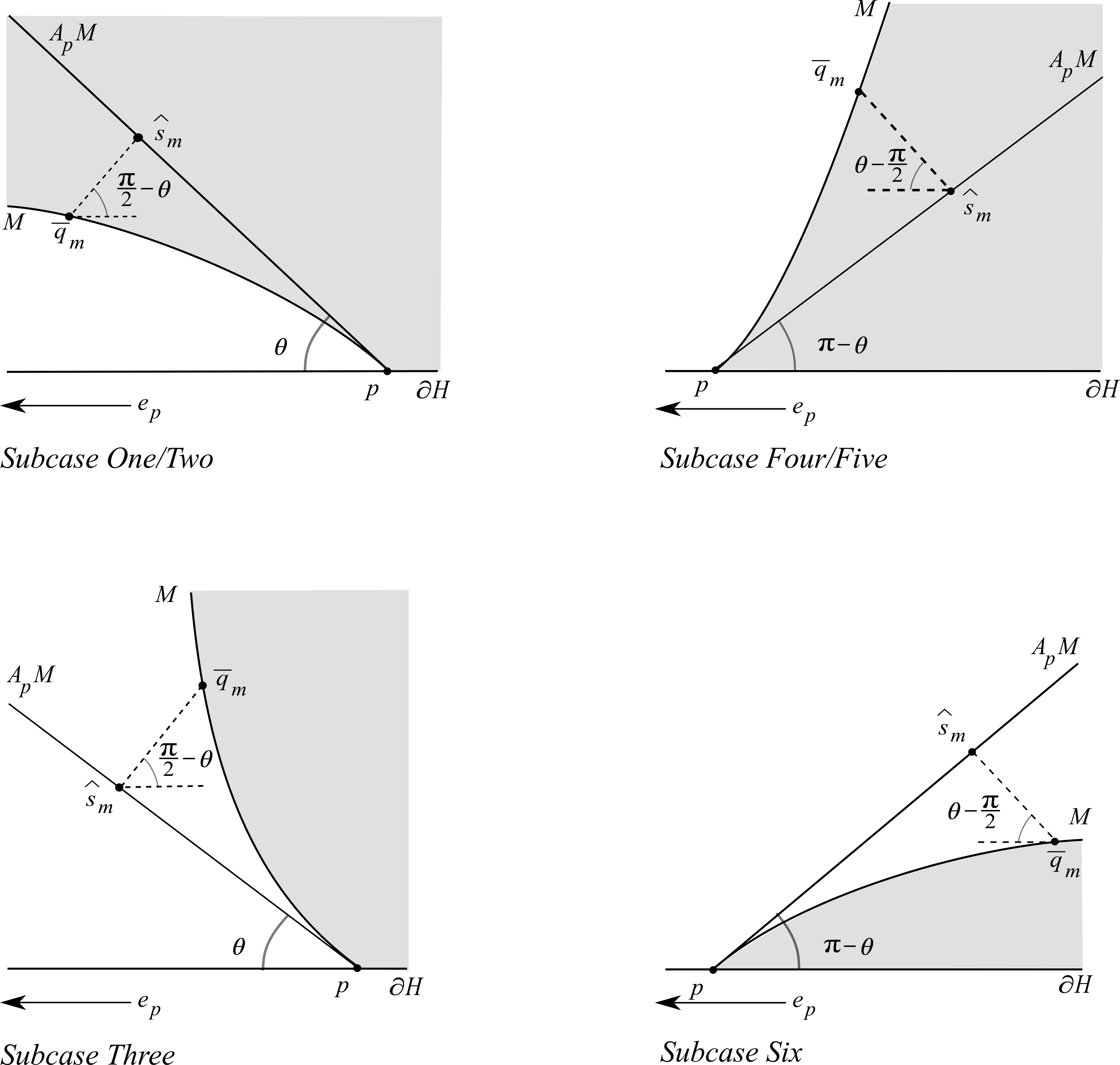}
 \caption{\small{The four subcases used to prove \eqref{denominator}. We can reduce the number of cases because we only need to consider the point $q_m$ or $\tilde{q}_m$ that minimizes the distance to $\pa H$. Here the shaded area represents the region inside of $E$.}}\label{six cases 2}
\end{figure}

Thus, by \eqref{f reg}, we have completed the proof of \eqref{numerator}. We now proceed with the proof of \eqref{denominator}. We will see that we can combine subcases one and two as well as subcases four and five, see Figure 4. Let $\bar{q}_m=q_m$ and $\hat{s}_m=\hat{q}_m$ if $q_m\cdot e_n=h_m$, and $\bar{q}_m=\tilde{q}_m$ and $\hat{s}_m=\hat{t}_m$  otherwise. Note that $b_m=|\hat{s}_m-p|.$  Exploiting that $h_m$ is approximately equal to $\hat{s}_m\cdot e_n$ will be essential in proving \eqref{denominator}.

\noindent \textit{Subcases one and two }: Suppose $\s>0$ and $\hat{a}_m\cdot e_p \le  \tilde{q}_m\cdot e_1$. 
Then, by \eqref{f reg}, for $m$ large enough
\[
h_m= b_m\sin\theta-f(b_m)\sin(\pi/2-\theta)=b_m\sin(\theta)-f(b_m)\cos\theta\ge b_m\sin(\theta)/2\,.
\]

\noindent \textit{Subcase three}: Suppose $\s>0$ and $   \tilde{q}_m\cdot e_p\le\hat{a}_m\cdot e_p$. Then 
\[
h_m= b_m\sin\theta+f(b_m)\sin(\pi/2-\theta)= b_m\sin(\theta)+f(b_m)\cos\theta\ge b_m\sin(\theta)\,.
\]

\noindent \textit{Subcases four and five}: Suppose $\s<0$ and $\hat{a}_m\cdot e_p \le  \tilde{q}_m\cdot e_p$. 
 Then 
\[
h_m= b_m\sin(\pi-\theta)+f(b_m)\sin(\pi/2-(\pi-\theta))= b_m\sin(\theta)+f(b_m)|\cos\theta|\ge b_m\sin(\theta)\,.
\]

\noindent \textit{Subcase six}: Suppose $\s<0$ and $\tilde{q}_m\cdot e_p\le\hat{a}_m\cdot e_p$.Then, by \eqref{f reg}, for $m$ large enough
\[
h_m= b_m\sin(\pi-\theta)-f(b_m)\sin(\pi/2-(\pi-\theta))= b_m\sin(\theta)-f(b_m)|\cos\theta|\ge b_m\sin(\theta)/2\,.
\]
Thus in each case we conclude that $h_m\ge b_m\sin(\theta)/2$. Because \eqref{real nume bound} implies
\[
 b_m=B_m-(B_m-b_m)\ge B_m-C(\s)B_m^{2+\a}\,,
\]we therefore have \eqref{denominator}, which completes the proof of step one.

\smallskip

\noindent {\it Step two}: In this step we will define an approximation $Q_m^\e$ of $Q_m$ and show that there is $\e(m)$ defined so that 
\begin{equation}\label{Step Two}
\liminf\limits_{m\rightarrow \infty}Q_m^{\e(m)}\ge\int_{E\setminus \rho(E)}\frac{2(n+s)(\l-x_1)(\l-p_1)}{\diam(E)^{n+s+2}}dx\,,
\end{equation}
and
\begin{equation}\label{e claim}
\lim_{m\rightarrow\infty}\e(m)= 0\,.
\end{equation}
First we define a new approximation of $\HH_E^{s}(q)$. 
For any $\e>0$, let 
\[
f_{\e}(z):=\left\{
\begin{array}{ll}
      \e^{n+s} & \mbox{if }z\in B_\e(0)\\
      |z|^{n+s} & z\in B_\e(0)^c\,,\\
\end{array} 
\right.
\] 
\[
\HH_E^{s,\e}(q):=\int_{\R^{n}}\frac{\tilde{\chi}_{E}(x)}{f_\e(|x-q|)}dx\,,
\]
and 
\[
Q_m^\e:=\HH_{\rho(E)}^{s,\e}(q_m^*)-\HH_E^{s,\e}(q_m)\,.
\]
By the continuity of $\HH_E^s$ we know that 
\[
\lim_{\e\rightarrow 0}\HH_E^{s,\e}(q)=\HH_E^s(q) \qquad\forall q\in M\cap H.
\]
We have
\begin{align}\label{Q e split}
Q_m^\e=&\int_{\R^n}\frac{\tilde{\chi}_{\rho(E)}(x)}{f_{\e}(|x-q_m^*|)}dx-\int_{\R^n}\frac{\tilde{\chi}_{E}(x)}{f_{\e}(|x-q_m|)}dx\nonumber\\
=&\int_{\R^n}\frac{\tilde{\chi}_{\rho(E)}(x)-\tilde{\chi}_{E}(x)}{f_{\e}(|x-q_m^*|)}dx +\int_{\R^n}\tilde{\chi}_{E}(x)\Big(\frac{1}{f_{\e}(|x-q_m^*|)}-\frac{1}{f_{\e}(|x-q_m|)}\Big)dx\,.
\end{align}
We handle the convergence of the two integrals separately. First let
\begin{equation}\label{I R def}
I^\e_m:=\int_{\R^n}\frac{\tilde{\chi}_{\rho(E)}(x)-\tilde{\chi}_{E}(x)}{f_{\e}(|x-q_m^*|)}dx\qquad R^{\e}_m:=\int_{\R^n}\tilde{\chi}_{E}(x)\Big(\frac{1}{f_{\e}(|x-q_m^*|)}-\frac{1}{f_{\e}(|x-q_m|)}\Big)dx\,.
\end{equation}
For any $\e>0$, we claim that, 
\begin{equation}\label{I claim}
\liminf_{m\rightarrow\infty} I^\e_m\ge \int_{(E\setminus \rho(E))\cap B_\e(p)^c}\frac{2(n+s)(\l-x_1)(\l-p_1)}{\diam(E)^{n+s+2}}dx\,.
\end{equation}
By symmetry across $\pi_\l$ we have
\begin{align}\label{case three to one}
I_\e=&2\int_{E\setminus \rho(E)}\frac{dx}{f_{\e}(|x-q_m^*|)}-2\int_{\rho(E)\setminus E}\frac{dx}{f_{\e}(|x-q_m^*|)}\,.\nonumber\\
=&2\int_{E\setminus \rho(E)}\Big(\frac{1}{f_{\e}(|x-q_m^*|)}-\frac{1}{f_{\e}(|\rho(x)-q_m^*|)} \Big)dx\nonumber\\
=&2\int_{(E\setminus \rho(E))\cap B_\e(q_m^*)^c}\Big(\frac{1}{f_{\e}(|x-q_m^*|)}-\frac{1}{f_{\e}(|\rho(x)-q_m^*|)} \Big)dx\nonumber\\
&+2\int_{(E\setminus \rho(E))\cap B_\e(q_m^*)}\Big(\frac{1}{f_{\e}(|x-q_m^*|)}-\frac{1}{f_{\e}(|\rho(x)-q_m^*|)} \Big)dx\,.
\end{align}
We consider each of the integrals in \eqref{case three to one} seperately. For the first integral we factor the integrand to get
\begin{equation}\label{factor I}
\frac{1}{f_{\e}(|x-q_m^*|)}-\frac{1}{f_{\e}(|\rho(x)-q_m^*|)} =\frac{1}{f_{\e}(|\rho(x)-q_m^*|)}\Big(\frac{f_{\e}(|\rho(x)-q_m^*|)}{f_{\e}(|x-q_m^*|)}-1 \Big)\,.
\end{equation}
We take $\e<(\l-p_1)/4$ so that $\rho(x)\in B_\e(q_m^*)^c$ for any $x\in E\setminus\rho(E)$. Then, following calculations in \eqref{case one calc two}, \eqref{case 1 mid}, and \eqref{case one pos} in case one, we know that if $x\in(E\setminus \rho(E)) \cap B_\e(q_m^*)^c $, then
\begin{equation}\label{^}
\frac{1}{f_{\e}(|\rho(x)-q_m^*|)}\Big(\frac{f_{\e}(|\rho(x)-q_m^*|)}{f_{\e}(|x-q_m^*|)}-1 \Big)\ge \frac{2(n+s)(\l-x_1)(\l-q_m^*\cdot e_1)}{\diam(E)^{n+s+2}}\,.
\end{equation}
Moreover if we let $\e<(\l-p_1)/4$ be fixed then as $m\rightarrow \infty$
\[
\chi_{B_\e(q_m^*)^c}\frac{2(n+s)(\l-x_1)(\l-q_m^*\cdot e_1)}{\diam(E)^{n+s+2}}\rightarrow \chi_{B_\e(p)^c}\frac{2(n+s)(\l-x_1)(\l-p_1)}{\diam(E)^{n+s+2}}\,.
\]
We know $(\l-x_1),(\l-q_m^*\cdot e_1)\le \diam(E)$, so 
\[
\Big|\chi_{B_\e(q_m^*)^c}\frac{2(n+s)(\l-x_1)(\l-q_m^*\cdot e_1)}{\diam(E)^{n+s+2}}\Big|\le \frac{2(n+s)}{\diam(E)^{n+s}}\,,
\]
for all $x\in E\setminus \rho(E),$ and therefore the dominated convergence theorem implies that
\begin{equation*}
\int_{(E\setminus \rho(E))\cap B_\e(q_m^*)^c}\frac{2(n+s)(\l-x_1)(\l-q_m^*\cdot e_1)}{\diam(E)^{n+s+2}}dx
\end{equation*}
\[\rightarrow\int_{(E\setminus \rho(E))\cap B_\e(p)^c}\frac{2(n+s)(\l-x_1)(\l-p_1)}{\diam(E)^{n+s+2}}dx\,,
\]
as $m\rightarrow\infty$. Hence for the first integral in \eqref{case three to one} we have
\[
\liminf_{m\rightarrow\infty}\int_{(E\setminus \rho(E))\cap B_\e(q_m^*)^c}\Big(\frac{1}{f_{\e}(|x-q_m^*|)}-\frac{1}{f_{\e}(|\rho(x)-q_m^*|)} \Big)dx\,.
\]
\begin{equation}\label{first integrand}
\ge\int_{(E\setminus \rho(E))\cap B_\e(p)^c}\frac{2(n+s)(\l-x_1)(\l-p_1)}{\diam(E)^{n+s+2}}dx\,.
\end{equation}
For the second integral in \eqref{case three to one}, having $\frac{1}{f_{\e}(|\rho(x)-q_m^*|)}< \e^{-(n+s)}$, for $x\in B_\e(q_m^*)$, implies that 
\[
\chi_{(E\setminus \rho(E))\cap B_\e(q_m^*)}\Big(\frac{1}{f_{\e}(|x-q_m^*|)}-\frac{1}{f_{\e}(|\rho(x)-q_m^*|)}\Big)
\]
is bounded above by $\e^{-(n+s)}$. So for fixed $\e$, the dominated convergence theorem implies that
\[
\int_{(E\setminus \rho(E))\cap B_\e(q_m^*)}\Big(\frac{1}{f_{\e}(|x-q_m^*|)}-\frac{1}{f_{\e}(|\rho(x)-q_m^*|)} \Big)dx
\]
\[
\rightarrow \int_{(E\setminus \rho(E))\cap B_\e(p)}\Big(\frac{1}{f_{\e}(|x-p|)}-\frac{1}{f_{\e}(|\rho(x)-p|)} \Big)dx\,,
\]
as $m\rightarrow \infty.$ But $f_{\e}(|\rho(x)-q_m^*|)=|\rho(x)-q_m^*|^{n+s}>2\e^{n+s}$. So for fixed $\e$ and $m$, and for $x\in (E\setminus \rho(E))\cap B_\e(q_m^*) $ we have
\begin{equation*}\begin{split}
\frac{1}{f_{\e}(|x-q_m^*|)}-\frac{1}{f_{\e}(|\rho(x)-q_m^*|)} 
&=\frac{1}{\e^{n+s}}-\frac{1}{|\rho(x)-q_m^*|^{n+s}}\\
&\ge\frac{1}{2\e^{n+s}}dx\ge 0\,.
\end{split}\end{equation*}
We conclude that, for large $m,$ the second integrand in \eqref{case three to one} is positive, therefore
\begin{equation}\label{second integral}
\liminf_{m\rightarrow\infty} \int_{(E\setminus \rho(E))\cap B_\e(q_m^*)}\Big(\frac{1}{f_{\e}(|x-q_m^*|)}-\frac{1}{f_{\e}(|\rho(x)-q_m^*|)}\Big)\ge 0\,,
\end{equation}
and, by \eqref{first integrand}, we have \eqref{I claim}. We now handle the convergence of $R^\e_m$ from \eqref{I R def}. We claim that there exists $\e(m)$ such that 
\begin{equation*}
\lim_{m\rightarrow\infty}\e(m)= 0
\end{equation*}
and 
\begin{equation}\label{R claim}
\lim_{m\rightarrow \infty} R^{\e(m)}_m=0\,.
\end{equation}
For $\e(m)$ to be chosen, let $X_{\e(m)}=B_{\e(m)}(q_m^*)\cap B_{\e(m)}(q_m)$. Note that
\[
\int_{X_{\e(m)}}\tilde{\chi}_{E}(x)\Big(\frac{1}{f_{\e(m)}(|x-q_m^*|)}-\frac{1}{f_{\e(m)}(|x-q_m|)}\Big)dx=0
\]
because $f_{\e(m)}(|x-q_m|)=f_{\e(m)}(|x-q_m^*|)={\e(m)}^{n+s}$ in $X_{\e(m)}$.
So
\begin{equation}\label{R def simple}
R^{\e(m)}_m=\int_{X_{\e(m)}^c}\tilde{\chi}_{E}(x)\Big(\frac{1}{f_{\e(m)}(|x-q_m^*|)}-\frac{1}{f_{\e(m)}(|x-q_m|)}\Big)dx\,.
\end{equation}
However we can rewrite
\[
\int_{X_{\e(m)}^c}\frac{\tilde{\chi}_{E}(x)\;dx}{f_{\e(m)}(|x-q_m|)}=\int_{B_{\e(m)}(q_m)^c}\frac{\tilde{\chi}_{E}(x)\;dx}{f_{\e(m)}(|x-q_m|)}+\int_{B_{\e(m)}(q_m)\setminus B_{\e(m)}(q_m^*)}\frac{\tilde{\chi}_{E}(x)\;dx}{f_{\e(m)}(|x-q_m|)}\,,
\]
and similarly 
\[
\int_{X_{\e(m)}^c}\frac{\tilde{\chi}_{E}(x)\;dx}{f_{\e(m)}(|x-q_m^*|)}=\int_{B_{\e(m)}(q_m^*)^c}\frac{\tilde{\chi}_{E}(x)\;dx}{f_{\e(m)}(|x-q_m^*|)}+\int_{B_{\e(m)}(q_m^*)\setminus B_{\e(m)}(q_m)}\frac{\tilde{\chi}_{E}(x)\;dx}{f_{\e(m)}(|x-q_m^*|)}\,,
\]
so, by \eqref{R def simple},
\begin{equation}\label{R bound}
\begin{split}
R^{\e(m)}_m\le &\Big|\int_{B_{\e(m)}(q_m)^c}\frac{\tilde{\chi}_{E}(x)\;dx}{f_{\e(m)}(|x-q_m|)}-\int_{B_{\e(m)}(q_m^*)^c}\frac{\tilde{\chi}_{E}(x)\;dx}{f_{\e(m)}(|x-q_m^*|)}\Big|\\
&+\Big|\int_{B_{\e(m)}(q_m)\setminus B_{\e(m)}(q_m^*)}\frac{\tilde{\chi}_{E}(x)\;dx}{f_{\e(m)}(|x-q_m|)}\Big|+\Big|\int_{B_{\e(m)}(q_m^*)\setminus B_{\e(m)}(q_m)}\frac{\tilde{\chi}_{E}(x)\;dx}{f_{\e(m)}(|x-q_m^*|)}\Big|\,.
\end{split}
\end{equation}
Note that there is a universal constant $C(n)$, dependent only on $n$, such that, for all $r>0$, we know
\[
|B_r(w)\Delta B_r(0)|\le C(n)r^{n-1}|w| \qquad \forall w\in\R^n\,.
\]
Therefore, we have 
\begin{equation}\label{3 main 1}
\int_{B_{\e(m)}(q_m)\setminus B_{\e(m)}(q_m^*)}\frac{\tilde{\chi}_{E}(x)\;dx}{f_{\e(m)}(|x-q_m|)}\le\frac{|B_{\e(m)}(q_m^*)\Delta B_{\e(m)}(q_m)|}{2{\e(m)}^{n+s}}\le\frac{C(n)|q_m-q_m^*|}{{\e(m)}^{s+1}}\,,
\end{equation}
and
\begin{equation}\label{3 main 2}
\int_{B_{\e(m)}(q_m^*)\setminus B_{\e(m)}(q_m)}\frac{\tilde{\chi}_{E}(x)\; dx}{f_{\e(m)}(|x-q_m^*|)}\le \frac{|B_{\e(m)}(q_m^*)\Delta B_\e(q_m)|}{2{\e(m)}^{n+s}}\le\frac{C(n)|q_m-q_m^*|}{{\e(m)}^{s+1}}\,.
\end{equation}
Also, by applying \cite[Remark 3.25]{AmbFusPal} with $u=\chi_E$, 
\[
\Big|\int_{B_{\e(m)}(q_m)^c}\frac{\tilde{\chi}_{E}(x)}{f_{\e(m)}(|x-q_m|)}dx-\int_{B_{\e(m)}(q_m^*)^c}\frac{\tilde{\chi}_{E}(x)}{f_{\e(m)}(|x-q_m^*|)}dx\Big|\quad
\]
\begin{align}\label{3 main 3}
&=\Big|\int_{B_{\e(m)}(0)^c}\Big(\frac{\tilde{\chi}_{E}(x+q_m)-\tilde{\chi}_{E}(x+q_m^*)}{|x|^{n+s}}\Big)dx\Big|\nonumber\\
&\le\int_{B_{\e(m)}(0)^c}\frac{\chi_{(E+q_m)\Delta(E+q_m^*)}}{|x|^{n+s}}dx\nonumber\\
&\le \frac{P(E)|q_m-q_m^*|}{{\e(m)}^{n+s}}\,.
\end{align}
We choose $\b>0$, and set $\e(m)=|q_m-q_m^*|^{1/(n+s+\b)}$. Note that \eqref{e claim} holds because 
\[
|q_m-q_m^*|=O\big((1/m)^{2+\a})\,.
\]
Moreover, for this choice of $\e(m)$, we also have that \eqref{3 main 1}, \eqref{3 main 2}, and \eqref{3 main 3} converge to $0$ as $m\rightarrow\infty,$ and therefore with \eqref{R bound}, we have \eqref{R claim}. By \eqref{e claim}, by \eqref{I claim}, and because of our choice of $\e(m)$, we have   
\begin{equation}\label{I claim real}
\liminf_{m\rightarrow\infty} I^{\e(m)}_m\ge \int_{E\setminus \rho(E)}\frac{2(n+s)(\l-x_1)(\l-p_1)}{\diam(E)^{n+s+2}}dx\,.
\end{equation}
Hence, by \eqref{Q e split}, together \eqref{I claim real} and \eqref{R claim} imply \eqref{Step Two}.
%\begin{equation*}
%\liminf\limits_{m\rightarrow \infty}\HH_{\rho(E)}^{s,\e(m)}(q_m^*)-\HH_E^{s,\e(m)}(q_m)\ge\int_{E\setminus \rho(E)}\frac{2(n+s)(\l-x_1)(\l-p_1)}{\diam(E)^{n+s+2}}dx%+ \frac{n\om_n|q_m-q_m^*|}{\e^{s+1}} + \frac{P(E)|q_m-q_m^*|}{s\e^s}\\
%%&\le&\int_{E\setminus \rho(E)}\frac{2(n+s)(\l-x_1)(\l-p_1)}{\diam(E)^{n+s+2}}+ n\om_n\e^{\de} + \frac{P(E)\e^{1+\b}}{s}\\
%\end{equation*}

\smallskip

\noindent {\it Step three}: 
We show that
\begin{equation}\label{switch order}
\liminf\limits_{m\rightarrow \infty}Q_m^{\e(m)}=\liminf\limits_{m\rightarrow \infty}\HH_{\rho(E)}^{s,\e(m)}(q_m^*)-\HH_E^{s,\e(m)}(q_m)=\lim\limits_{m\rightarrow \infty} \HH_{\rho(E)}^s(q_m^*)-\HH_E^{s}(q_m)=\lim\limits_{m\rightarrow \infty}Q_m\,,
\end{equation}
to get \eqref{int diff} in case three. To see this convergence we show that 
\begin{equation}\label{conv 331}
\lim\limits_{m\rightarrow \infty} |\HH_E^{s,\e(m)}(q_m)-\HH_E^s(q_m)|=0\,,
\end{equation}
and equivalently that
\begin{equation}\label{conv 332}
\lim\limits_{m\rightarrow \infty} |\HH_{\rho(E)}^{s,\e(m)}(q_m^*)-\HH_{\rho(E)}^s(q_m^*)|=0\,.
\end{equation}
Here we use the properties of the graphs locally defining $M$ and $\rho(M)$. By definition,
\begin{equation*}
\big|\HH_{E}^{s,\e(m)}(q_m)-\HH_E^s(q_m)\big|=\Big|\int_{B_{\e(m)}(q_m)}\tilde{\chi}_E(x)\Big[\frac{1}{|x-q_m|^{n+s}}-\frac{1}{\e(m)^{n+s}}\Big]dx\Big|\,.
\end{equation*}
Because $\e(m)\rightarrow 0$ as $m\rightarrow\infty$, we know that $\e(m)<\eta$, for $m$ large enough and for $\eta$ as in the definition of $P_{\eta,\g}(q)$ in \eqref{parabola}. So, for $m$ large enough, we have
\begin{equation*}\begin{split}
\big|\HH_E^{s,\e(m)}(q_m)-\HH_E^s(q_m)\big|&=\Big|\int_{ P_{\e(m),\g}(q_m)}\tilde{\chi}_E(x)\Big(\frac{1}{|x-q_m|^{n+s}}-\frac{1}{\e(m)^{n+s}}\Big)dx\Big|\\
&\le\int_{ P_{\e(m),\g}(q_m)}\frac{1}{|x-q_m|^{n+s}}dx\,,
\end{split}\end{equation*}
which goes to 0 as $m\rightarrow \infty.$ Therefore we have \eqref{conv 331} and \eqref{conv 332}, and thus \eqref{switch order} as claimed.
Hence, by \eqref{Step Two} from step two we have
\[
\lim\limits_{m\rightarrow \infty} Q_m\ge\int_{E\setminus \rho(E)}\frac{2(n+s)(\l-x_1)(\l-p_1)}{\diam(E)^{n+s+2}}dx\,,
\]
which combined with \eqref{*} from step one, implies
\[
\frac{5|\l-p_1|\de_s(E)}{2\diam(E)^{s+1}}\ge\int_{E\setminus \rho(E)}\frac{2(n+s)(\l-x_1)(\l-p_1)}{\diam(E)^{n+s+2}}dx\,.
\]
Therefore, since $\l-p_1>0,$ we have \eqref{int diff} in case three.

\medskip

\noindent\textit{Case four}: In this case we assume that $p\in \pi_\l\cap\bd(M).$ As in case three we use the local graphicality of $M$ and $\rho(M)$ near $p$ to find a sequence of points to approximate the work done in case two. We want a sequence of points approaching $p$ in $M\cap \pi_\l=\rho(M)\cap \pi_\l.$ Let $\hat{q}_m:=p-\nu_{co}^M(p)/m,$ where $m\in \N$ and  $\nu_{co}^M(p)$ is the conormal vector for $p$ with respect to $M.$ Set $q_m=v(\hat{q}_m)$ for $v$ as in case three. Let $u_{\e,q}(x):=\var_\e(|x-q|)$ and let
\[
\HH_E^{s,\e}(q):={\rm p.v.}\int_{\R^n}\tilde{\chi}_{E}(x)\var_\e(|x-q|)dx\,,
\] 
as in case two. For any $\t_m\in T_{q_m}M$ such that $\t_m\cdot e_n=0$, by \cite[Lemma 2.1]{ciraolofigallimagginovaga}, we know 
\[
\nabla \HH_E^{s,\e}(q_m)\cdot \t_m= \int_{\R^n}\tilde{\chi}_E(x)\nabla u_{\e,q_m}(x)\cdot \t_m\;dx\,.
\]
Let $P_m$ be the hyperplane through $q_m$ which is perpendicular to $\t_m.$ Set $E_m^*$ to be the reflection of $E$ across $P_m.$ Then, because $\nabla u_{\e,q_m}(x)$ is odd with respect to $P_m,$ and because $\t_m\cdot e_n=0$, we have
\[
\nabla \HH_E^{s,\e}(q_m)\cdot \t_m=- 2\int_{E\setminus E_m^*}\nabla u_{\e,q_m}(x)\cdot \t_m \;dx\,.
\]
But because $\nabla u_{\e,q_m}(x)\cdot \t_m=\var'_\e(|x-q_m|)\frac{(x-q_m)\cdot \t_m}{|x-q_m|}$ we know
\begin{equation*}\begin{split}
\nabla \HH_E^{s,\e}(q_m)\cdot \t_m&=- 2\int_{E\setminus E_m^*}\var'_\e(|x-q_m|)\frac{(x-q_m)\cdot \t_m}{|x-q_m|}dx\\
&=- 2\int_{\R_n}\chi_{E\setminus E_m^*}(x)\var'_\e(|x-q_m|)\frac{(x-q_m)\cdot \t_m}{|x-q_m|}dx\,.
\end{split}\end{equation*}
Therefore, by the monotone convergence of $|\vphi'_\e|$ in $\e$
\begin{equation}\label{tangent limit}
\lim_{\e\rightarrow 0}\nabla\HH_E^{s,\e}(q_m)\cdot \t_m=-2(n+s)\int_{\R^n}\chi_{E\setminus E^*_m}(x)\frac{(q_m-x)\cdot \t_m}{|x-q_m|^{n+s+2}}dx\,.
\end{equation}
Next, set
\[
\t_m:=e_1-(w_m\cdot e_1)\frac{w_m}{||w_m||}
\]
with
\[
w_m:=\nu_{E}(q_m)-(\nu_{E}(q_m)\cdot e_n)e_n\,.
\]
Then $\t_m\rightarrow e_1$ and $\chi_{E\setminus E_m^*}\rightarrow \chi_{E\setminus \rho(E)}$ pointwise in $m$ because $\nu_E(p)\cdot e_1=0$ and $\nu_E(q_m)\rightarrow \nu_E(p)$ as $m\rightarrow\infty$ imply that $w_m\cdot e_1\rightarrow 0$ as $m\rightarrow\infty$. So
\[
\lim\limits_{m\rightarrow\infty}\chi_{E\setminus E^*_m}(x)\frac{(q_m-x)\cdot \t_m}{|x-q_m|^{n+s+2}}=\chi_{E\setminus \rho(E)}(x)\frac{(p-x)\cdot e_1}{|x-p|^{n+s+2}}
\]
pointwise. By definition, $\t_m\cdot e_n=0$ and $\t_m\in T_{q_m}M$. Therefore, by applying Fatou's lemma to \eqref{tangent limit}, we have
\[
\liminf\limits_{m\rightarrow\infty}\lim_{\e\rightarrow 0}-\nabla\HH_E^{s,\e}(q_m)\cdot \t_m\ge\int_{\R_n}\chi_{E\setminus \rho(E)}(x)\frac{(p-x)\cdot e_1}{|x-p|^{n+s+2}}dx=\int_{E\setminus \rho(E)}\frac{(p-x)\cdot e_1}{|x-p|^{n+s+2}}dx\,.
\]
Thus, because $\frac{(p-x)\cdot e_1}{|x-p|^{n+s+2}}$ is positive by construction and 
\[
\de_s(E)\ge\diam(E)^{s+1}|\nabla \HH_E^{s,\e}(q_m)\cdot \t_m|\,,
\]
 for all $m$ large, we have \eqref{int diff} in case four.

\smallskip

\noindent\textit{Conclusion to part (a)}:
We have concluded in every case that 
\begin{equation*}
\int_{E\Delta \rho(E)}\dist(x,\pi_\l)dx\le \frac{5}{2(n+s)}\diam(E)^{n+1}\de_s(E)\,.
\end{equation*}
Thus, by Chebyshev's inequality, for any $\b>0$,
\[
\big|\{x\in E\Delta\rho(E):\dist(x,\pi_\l)\ge \b\}\big|\le\frac{1}{\b}\frac{5}{2(n+s)}\diam(E)^{n+1}\de_s(E)\,.
\]
So because 
\[
\big|\{x\in E\Delta\rho(E):\dist(x,\pi_\l)\le \b\}\big|\le 2\b\diam(E)^{n-1}\,,
\]
by setting 
\[
\b=\sqrt{\frac{5}{2(n+s)}}\diam(E)\sqrt{\de_s(E)}\,,
\]
we have
\begin{equation*}
|E\Delta\rho (E)|\le C_1\diam(E)^{n}\sqrt{\de_s(E)}\,,
\end{equation*}
for 
\begin{equation}\label{constant one}
C_1=3\sqrt{\frac{5}{2(n+s)}}\,.
\end{equation}
\end{proof}

\section{Improved Understanding of the Geometry}
We will use the following lemma to prove Theorem \ref{thm axially symmetric}\ref{improved thm}. The proof closely follows the proof of \cite[Lemma 4.1]{ciraolofigallimagginovaga}.

\begin{lemma}\label{l bound 1} Let $E$ be as in assumption (h1) with 
\begin{equation}\label{delta bound}
\sqrt{\de_s(E)}\le \frac{1}{3} \min\Big\{\frac{1}{2},\frac{1}{n-1}\Big\}\sqrt{\frac{2(n+s)}{5}}\frac{|E|}{\diam(E)^{n}}\,.
\end{equation}
Suppose that the critical planes with respect to the coordinate directions, coincide with $\{x_i=0\}$ for all $i=1,...,n-1$. If $e\in S^{n-1}\cap\pa H$, and $\l_e$ is the critical value associated with $e$ in \eqref{lambda}, then 
\begin{equation*}\label{lambda bound}
|\l_e| \le C\frac{\diam(E)^{n+1}}{|E|}\sqrt{\de_s(E)}
\end{equation*} 
holds for some constant $C=C(n).$
%\[
%C_2=4(n+2)C_1\,,
%\]
%for $C_1$ as in (\ref{constant one}).
\end{lemma}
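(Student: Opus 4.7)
The plan is to recast the near-symmetry bounds from Theorem~\ref{thm axially symmetric}(a) as barycenter estimates, then exploit that in direction $e\in \partial H$ the position of $\pi_{\lambda_e}$ relative to the origin is pinned down by the $n-1$ coordinate symmetries. This mirrors the strategy of \cite[Lemma~4.1]{ciraolofigallimagginovaga}, simply transplanted to the half-space setting.

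First, I invoke Theorem~\ref{thm axially symmetric}(a) for the direction $e$ to obtain $|E\Delta \rho_{\lambda_e}(E)|\le C_1\diam(E)^n\sqrt{\delta_s(E)}$, with $C_1=3\sqrt{5/(2(n+s))}$ as in \eqref{constant one}. Under the hypothesis that $\pi_{e_i}=\{x_i=0\}$ for $i=1,\ldots,n-1$, the same bound applies to each coordinate reflection $\rho_i$, giving $|E\Delta\rho_i(E)|\le C_1\diam(E)^n\sqrt{\delta_s(E)}$.

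Next, I translate these into barycenter estimates. Writing $c:=|E|^{-1}\int_E x\,dx$, the change of variables $x\mapsto \rho_\mu^v x$ (which preserves Lebesgue measure and flips the sign of $x\cdot v-\mu$) shows that for any reflection $\rho_\mu^v$ across $\{x\cdot v=\mu\}$,
\[
2|E|(c\cdot v-\mu)=\int_{E\setminus \rho_\mu^v(E)}(x\cdot v-\mu)\,dx-\int_{\rho_\mu^v(E)\setminus E}(x\cdot v-\mu)\,dx,
\]
so $|c\cdot v-\mu|\le \diam(E)\,|E\Delta \rho_\mu^v(E)|/(2|E|)$. Applied to the $n-1$ coordinate reflections and to $\rho_{\lambda_e}$, this yields
\[
|c_i|\le \frac{C_1\diam(E)^{n+1}}{2|E|}\sqrt{\delta_s(E)}\quad(i=1,\ldots,n-1),\qquad |c\cdot e-\lambda_e|\le \frac{C_1\diam(E)^{n+1}}{2|E|}\sqrt{\delta_s(E)}.
\]

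Finally, since $e\in S^{n-1}\cap \partial H$ is orthogonal to $e_n$, I decompose $e=\sum_{i=1}^{n-1}(e\cdot e_i)e_i$, so that $c\cdot e=\sum_{i=1}^{n-1}(e\cdot e_i)c_i$ with $|e\cdot e_i|\le 1$. The triangle inequality gives
\[
|\lambda_e|\le |\lambda_e-c\cdot e|+\sum_{i=1}^{n-1}|c_i|\le \frac{nC_1}{2}\cdot\frac{\diam(E)^{n+1}}{|E|}\sqrt{\delta_s(E)},
\]
which is the claim with $C=nC_1/2$. The smallness hypothesis \eqref{delta bound} is used only to guarantee that each individual defect $|E\Delta \rho_i(E)|$ is a genuine fraction of $|E|$, so that the $n-1$ barycenter contributions can be combined meaningfully. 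There is no serious geometric obstacle in the plan; the only point requiring care is the change-of-variables identity of the middle step, and the final bookkeeping of the dimensional factor $n$ in the sum.
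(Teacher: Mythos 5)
Your proof is correct, but it takes a genuinely different route from the paper. The paper follows \cite[Lemma 4.1]{ciraolofigallimagginovaga}: it symmetrizes $E$ across the $n-1$ coordinate planes to get $|E\Delta E^0|\le (n-1)C_1^*\sqrt{\de_s(E)}$, bounds the volume of $E$ in the strip $\{|x\cdot e|\le\l_e\}$ and its reflected copies, and then uses the monotonicity of the cross-sections $\H^{n-1}(E\cap\pi_\mu)$ for $\mu>\l_e$ (a consequence of the moving-planes inclusion) to sum the strip volumes and extract $\l_e$; this is exactly where the smallness hypothesis \eqref{delta bound} enters, via $|E\cap\{x\cdot e>\l_e\}|\ge|E|/4$. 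Your barycenter argument replaces all of this: the odd-symmetry cancellation over $E\cap\rho(E)$ gives $|c\cdot v-\mu|\le\diam(E)\,|E\Delta\rho_\mu^v(E)|/(2|E|)$ for each critical reflection, and the triangle inequality with the $n-1$ coordinate symmetries pins down $\l_e$, yielding a shorter proof with a comparable constant and, notably, one that does not use \eqref{delta bound} at all --- so your closing remark about where the smallness hypothesis is needed is inaccurate (it is simply not needed on your route, whereas it is genuinely used on the paper's). Two small points you should make explicit: (i) the bound $|x\cdot v-\mu|\le\diam(E)$ on $E\Delta\rho_\mu^v(E)$ requires knowing that the critical value lies in the slab of $E$, i.e.\ $\inf_{x\in E}x\cdot e\le\l_e\le\sup_{x\in E}x\cdot e$, which follows directly from the definition \eqref{lambda} (the inclusion holds for every $\mu\ge\sup_E x\cdot e$ and fails for every $\mu<\inf_E x\cdot e$), and likewise $0$ lies in the slab for each coordinate direction by the hypothesis on the critical planes; (ii) your constant $nC_1/2$ involves $C_1=3\sqrt{5/(2(n+s))}$, which depends on $s$, but since $s\in(0,1)$ it is bounded by an $n$-only quantity, so the statement $C=C(n)$ is unaffected (the paper's constant $4(n+1)C_1$ has the same feature).
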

\begin{proof}
We begin by letting $F^0:=\{(-\hat{x},x_n):x\in F\}$ for $F\subset \R^n$. Set 
\[
C_1^*:=C_1\diam(E)^{n}\,.
\]
with $C_1$ as defined in \eqref{constant one} from the proof of Theorem \ref{thm axially symmetric}(a). Theorem~\ref{thm axially symmetric}(a) implies 
\begin{equation}\label{combined thm}
|E\Delta E^0|\le (n-1)C_1^*\sqrt{\de_s(E)}\,,
\end{equation}
because $E^0$ can be obtained from $E$ by symmetrizing with respect to the $n-1$ hyperplanes $\{x_i=0\}$ for $i=1,..,n-1$.
So by \eqref{delta bound} we have $|E\Delta E^0|\le |E|.$ Without loss of generality we assume $\l_e>0$. Note that 
\begin{equation}\label{big L bound}
\L_e:=\sup\{x\cdot e:x\in E\}\le \diam(E)\,.
\end{equation}
If not, we would have $x\cdot e\ge 0$ for all $x\in E,$ which would imply that $|E\Delta E^0|=2|E|$. Moreover, by Theorem~\ref{thm axially symmetric}(a) we also have  
\begin{equation}\label{refl bound}
|E\Delta \rho(E)|\le C_1^*\sqrt{\de_e(E)}\,,
\end{equation}
with $\rho$ defined to be the function that reflects $E$ across the critical hyperplane $\pi_e$. 
So
\begin{equation}\label{most mass}
|E\cap\{x\cdot e>\l_e\}|\ge\frac{1}{2}(|E|-C_1^*\sqrt{\de_s(E)})\,.
\end{equation}
Then, by \eqref{combined thm} and \eqref{most mass}, we have
\[
|E\cap \{x\cdot e>\l_e\}^0|=|E^0\cap \{x\cdot e>\l_e\}|\ge |E\cap\{x\cdot e>\l_e\}|-|E\Delta E^0|\ge\frac{|E|}{2}-\Big(n-\frac{1}2\Big)C_1^*\sqrt{\de_s(E)}\,,
\]
so, by applying \eqref{most mass} again, we know
\begin{align}\label{small strip}
|\{x\in E:-\l_e\le x\cdot e = \l_e\}|\le&\, |\{x\in E:x\cdot e<0\}|-|E\cap \{x\cdot e>\l_e\}^0|\nonumber \\
&+ |\{x\in E:x\cdot e>0\}| -|E\cap\{x\cdot e>\l_e\}|  \nonumber\\
\le&\, nC_1^*\sqrt{\de_s(E)}\,.
\end{align}
We have shown that $E$ has small volume in the strip $\{|x\cdot e|<\l_e\}.$ We continue by quantifying the volume of $E$ in parallel strips of the same width. The set $\{\l_e\le x\cdot e\le 3\l_e\}$ is mapped into $\{|x\cdot e|<\l_e\}$ by reflection with respect to the critical hyperplane, so by \eqref{refl bound} and \eqref{small strip} we have
\begin{equation*}\begin{split}
|\{x\in E:\l_e\le x\cdot e \le 3\l_e\}|&=|\{x\in \rho(E):|x\cdot e| \le \l_e\}|\\
&\le|\{x\in E:|x\cdot e| \le \l_e\}|+|E\Delta \rho(E)|\le (n+1)C_1^*\sqrt{\de_s(E)}\,.\\
\end{split}\end{equation*}
Now let
\[
m_k:=|\{x\in E:(2k-1)\l_e\le x\cdot e\le (2k+1)\l_e\}|, \qquad k\ge 1\,.
\]
By the moving planes procedure, if $\l_e\le \mu'\le\mu$, then $E\cap \pi_{\mu}\subset E\cap\pi_{\mu'}$, where each set is seen as a subset in $\R^{n-1}.$ So because $\H^{n-1}(E\cap\pi_\mu)$ is decreasing in $\mu$, for $\mu\in(\l_e,\L_e)$, it follows that $m_k$ is decreasing in $k.$ Therefore
\begin{equation}\label{m gets smaller}
m_k\le m_1\le (n+1)C_1^*\sqrt{\de_s(E)}\qquad k\ge 1\,.
\end{equation}
Let $k_0$ be the smallest natural number such that $(2k_0+1)\l_e\ge \L_e,$ which implies $(2k_0-1)\l_e\le \L_e.$  Because  $E\subset\{x\cdot e\le \L_e\}$, by \eqref{m gets smaller} we see that 
\begin{equation*}
|E\cap\{x\cdot e>\l_e\}|=|E\cap\{\l_e\le x\cdot e\le \L_e\}|=\sum_{k=1}^{k_0}m_k\le \frac{1}2\Big(\frac{\L_e}{\l_e}+1\Big)(n+1)C_1^*\sqrt{\de_s(E)}\,.
\end{equation*}
Then \eqref{big L bound} implies
\begin{equation}\label{almost}
|E\cap\{x\cdot e>\l_e\}|\l_e\le (n+1)C_1^*\diam(E)\sqrt{\de_s(E)}\,.
\end{equation}
Lastly, by (\ref{delta bound}) and (\ref{most mass}), we know $|E\cap\{x\cdot e>\l_e\}|\ge|E|/4$, which combined with  \eqref{almost} and the definition of $C_1$ implies 
\begin{equation}\label{real lambda bound}
\l_e\le \frac{(n+1)C_1^*\diam(E)\sqrt{\de_s(E)}}{|E\cap\{x\cdot e>\l_e\}|}\le 4(n+1)C_1\frac{\diam(E)^{n+1}}{|E|}\sqrt{\de_s(E)}\,,
\end{equation}
as required.\end{proof}

\begin{proof}[Proof of Theorem 1.1 (b)] Let 
\[
\de_0=\frac{1}{3} \min\Big\{\frac{1}{2},\frac{1}{n-1}\Big\}\sqrt{\frac{2(n+s)}{5}}\frac{|E|}{\diam(E)^{n}}.
\] We assume that $\de(E)\le \de_0$, so Lemma \ref{l bound 1} applies. Up to translation, we may also assume that the critical planes with respect to the coordinate directions $e_i$ coincide with $\{x_i=0\}$ for every $i=1,...,n-1.$
Let $E_h$ be the cross section of $E$ parallel to $\pa H$ at height $h.$ For each height $h$ let 
\[
r_h=\inf\limits_{x\in \pa E_h}|x-he_n|\qquad\mbox{ and }\qquad R_h=\sup\limits_{x\in \pa E_h}|x-he_n|\,.
\]
Choose $x_h,y_h$ such that $|x_h-h e_n|=r_h$ and $|y_h-he_n|=R_h$. Without loss of generality we may assume that $x_h\ne y_h$, otherwise $R_h-r_h=0$. Let 
\[
e_h:=\frac{y_h-x_h}{|y_h-x_h|}\,,
\]
let $\l_h=\l_{e_h}$ be the critical value for $e_h,$ and let $\pi_{h}=\pi_{\l_{e_h}}$ denote the critical hyperplane. 
 Note that $y_h$ is closer to $\pi_h$ than $x_h$ in $\{x_n=h\}$, that is, 
\begin{equation}\label{dist plane}
\dist(x_h,\pi_{h})\ge\dist(y_h,\pi_{h})\,.
\end{equation}
Indeed, by the moving planes method, the critical position can be reached at most when $\rho(y_h)$, the reflection of $y_h$ with respect to $\pi_h$, is $x_h$. In this case we have equality in \eqref{dist plane}, but otherwise we have strict inequality. So by \eqref{dist plane}, and because $e_h$ is parallel to $y_h-x_h,$ we have
\begin{equation}\label{rs diff}\begin{split}
R_h-r_h&=|y_h-he_n|-|x_h-he_n|\le |(y_h-h e_n)-(x_h-h e_n)|\\
&=(\dist(y,\pi_{h})+\l_{h})-(\dist(x,\pi_{h})-\l_{h})\\
&\le2|\l_{h}|\,.
\end{split}\end{equation}
Set $C_2=4(n+1)C_1$, with $C_1$ from \eqref{constant one}. Combining \eqref{real lambda bound} with \eqref{rs diff}, we get
\begin{equation*}\label{radii dist}
R_h-r_h\le 2C_2\frac{\diam(E)^{n+1}}{|E|}\sqrt{\de_s(E)}\,,
\end{equation*}
or equivalently 
\begin{equation*}\label{radii dist}
\frac{R_h-r_h}{\diam(E)}\le 2C_2\frac{\diam(E)^{n}}{|E|}\sqrt{\de_s(E)}\,,
\end{equation*}
which is \eqref{R diff bound}.

Note that  \eqref{R diff bound} just implies that the boundary of $E_h$ is contained in an annulus with radii $r_h$ and $R_h$. In fact, if $\diam(E_h)$ is small enough then $E_h$ could be contained in the annulus as well, and may not contain the inner ball $D_{h}:=B_{r_h}(he_n)\cap\{x_n=h\}$. However, we will show that if 
\begin{equation}\label{slice di bound}
\frac{\diam(E_h)}{\diam(E)}>  6C_2\frac{\diam(E)^{n}}{|E|}\sqrt{\de_s(E)}\,,
\end{equation}
then $D_h\subset E_h$. Suppose that $D_{h}$ is not contained in $E_h$. % Let
%\[
%\tilde{e}_h:=\frac{y_h-he_n}{|y_h-he_n|}\,.
%\]
%Also define 
%\[
%t_h:=\inf\big\{t:\big(t(y_h-he_n)+he_n\big)\in E_h\big\} \qquad \mbox{ and }\qquad z_h:=t_h(y_h-he_n)+he_n\,.
%\]
%Note that by assumption $t_h\in[r_h/R_h,1]$, so 
By applying the moving planes argument in any direction $e$ such that $e\cdot e_n=0$ and $\{x\in E_h:e\cdot x/|x|=\pm1\}\ne\emptyset$, we know that 
\[
|\l_e|\ge r_h.
\]
Together with \eqref{real lambda bound} this implies  
\begin{equation*}\label{small di r bound}
r_h\le C_2\frac{\diam(E)^{n+1}}{|E|}\sqrt{\de_s(E)}\,,
\end{equation*}
so 
\begin{equation*}%\label{small enough di}
\diam(E_h)\le 2R_h\le 2\Big(2C_2\frac{\diam(E)^{n+1}}{|E|}\sqrt{\de_s(E)}+r_h\label{small enough di}\Big)\le  6C_2\frac{\diam(E)^{n+1}}{|E|}\sqrt{\de_s(E)}\,,
\end{equation*}
which is equivalent to \eqref{slice di bound}.
In particular, combining this inequality with $\diam(E_h)\ge R_h-r_h$ gives
\[
R_h\le \diam(E_h)+r_h\le 7C_2\frac{\diam(E)^{n+1}}{|E|}\sqrt{\de_s(E)}\,,
\]
which concludes the proof of the theorem. 
\end{proof}
%
%\begin{remark}
%{\rm It would be nice to show something like, if $\diam(E_h)$ is large enough in terms of $\de_s(E),$ then if $\de_s(E)$ is small enough, we have $E_h$ is contained in two horizontal rescalings of the slice of the Schwarz symmetrization $E_h^*$, with the scaling factor independent of $h$. More specifically we would have liked, for
%\[
%S_r(x):=(rx',x_n)\,,
%\] 
%and 
%\[
%B_{\de}:=36\sqrt{5(n+2)}\frac{\diam(E)^{n+1}}{|E|}\sqrt{\de_s(E)}\,,
%\]
%to get that, if
%\[
%\diam(E_h)> B_{\de}\,,
%\]
%then we have a $r,R>0$ both finite such that 
%\[
%S_r(E_h^*)\subset E_h \subset S_R(E_h^*)\,,
%\]
%for $R,r$ converging to $1$ as $\de\rightarrow 0.$
%But this would require 
%\[
%r:=\inf\limits_{\diam(E_h)>B_\de} \frac{r_h}{d_h} \qquad\mbox{ and }\quad R:=\sup\limits_{\diam(E_h)>B_\de} \frac{R_h}{d_h}
%\]
%for $d_h:=\diam(E_h^*)/2, $ and there is no reason to expect $r$ and $R$ to be restricted in size with respect to $\de.$
%Indeed we have that 
%\[
%\frac{R_h}{d_h}=1+\frac{R_h-d_h}{d_h}\le 1+\sqrt{5(n+2)}\frac{\diam(E)^{n+1}}{d_h|E|}\sqrt{\de_s(E)}\,,
%\]
%and 
%\[
%d_h\ge r_h\ge  \sqrt{5(n+2)}\frac{\diam(E)^{n+1}}{|E|}\sqrt{\de_s(E)}
%\]
% so the tightest bound I can get is 
% \[
%\frac{R_h}{d_h}\le 1+\frac{R_h-d_h}{d_h}\le 2\,.
%\]
%This suggests that it is more likely that slices of $E$ for which $\diam(E_h)/\diam(E)$ is larger are forced to be more symmetric than slices for which $\diam(E_h)/\diam(E)$ is smaller.}
%\end{remark}

\bibliography{references}
\bibliographystyle{is-alpha}

\end{document}